\DeclareRobustCommand{\SkipTocEntry}[5]{}
\definecolor{LOcolor}{RGB}{150,100,0}
\newtheorem{Theorem}{Theorem}[section]
\newtheorem{Corollary}[Theorem]{Corollary}
\newtheorem{Proposition}[Theorem]{Proposition}
\newtheorem{Question}[Theorem]{Question}
\theoremstyle{definition}
\newtheorem{Definition}[Theorem]{Definition}
\newtheorem{Example}[Theorem]{Example}
\newtheorem{Remark}[Theorem]{Remark}
\numberwithin{equation}{section}
\newcommand{\mR}{\mathbb{R}}                    
\newcommand{\mC}{\mathbb{C}}                    
\newcommand{\mN}{\mathbb{N}}                    
\newcommand{\abs}[1]{\lvert #1 \rvert}          
\newcommand{\norm}[1]{\lVert #1 \rVert}         
\newcommand{\R}{\mathbb{R}}   
\newcommand{\ol}[1]{\overline{#1}}
\newcommand{\eps}{\varepsilon}
\newcommand{\dbar}{\overline{\partial}}
\newcommand{\p}{\partial}
\newcommand{\s}{\hspace{0.5pt}}
\newcommand{\M}{\widetilde{M}}
\def\C{\mathbb C}
\def\N{\mathbb N}
\newcounter{sidenote}
\begin{document}

\title{Applications of the Stone-Weierstrass theorem in the Calder\'on problem} 

\author[T. Liimatainen]{Tony Liimatainen}
\address{Department of Mathematics and Statistics, University of Helsinki, PO Box 68, 00014 Helsinki, Finland}
\email{tony.liimatainen@helsinki.fi}

\author[M. Salo]{Mikko Salo}
\address{Department of Mathematics and Statistics, University of Jyv\"askyl\"a, PO Box 35, 40014 Jyv\"askyl\"a, Finland}
\email{mikko.j.salo@jyu.fi}




\begin{abstract}
We give examples on the use of the Stone-Weierstrass theorem in inverse problems. We show uniqueness in the linearized Calder\'on problem on holomorphically separable K\"ahler manifolds, and in the Calder\'on problem for nonlinear equations on conformally transversally anisotropic manifolds. We also study the holomorphic separability condition in terms of plurisubharmonic functions. The Stone-Weierstrass theorem allows us to generalize and simplify earlier results. It also makes it possible to circumvent the use of complex geometrical optics solutions and inversion of explicit transforms in certain cases.
\end{abstract}

\maketitle

\section{Introduction} \label{sec_introduction}

In this work we study versions of the geometric (or anisotropic) Calder\'on problem. This inverse problem was studied in \cite{Calderon1980} for the purpose of determining the electrical conductivity in a Euclidean domain from voltage and current measurements on its boundary. There is a substantial literature on the Calder\'on problem and we refer the readers to the survey \cite{Uhlmann2014}.

The anisotropic Calder\'on problem corresponds to the case of matrix-valued conductivities. In dimensions $\geq 3$ the problem can be reformulated in geometric terms as follows (see e.g.\ \cite{DosSantosFerreira2009} for further details). Let $(M,g)$ be a compact oriented Riemannian manifold with smooth boundary, and let $q \in C^{\infty}(M)$. Consider the Cauchy data set 
\[
C_{g,q} = \{ (u|_{\p M}, \p_{\nu} u|_{\p M}) \,:\, \text{$u \in C^{\infty}(M)$ satisfies $(-\Delta_g+q)u = 0$ in $M$} \}.
\]
Here $\Delta_g$ is the Laplace-Beltrami operator and $\p_{\nu}$ is the normal derivative with respect to $g$. The anisotropic Calder\'on problem corresponds to determining a Riemannian metric $g$, up to a boundary fixing isometry, from the knowledge of $C_{g,0}$. For metrics in a fixed conformal class the problem reduces to determining an unknown potential $q$ from the knowledge of $C_{g,q}$. In this work we will consider the case where $g$ is fixed and we wish to recover $q$. A standard method for proving such results is to show a density result of the following type.

\begin{Question}[Completeness of products]
Let $(M,g)$ be a compact Riemannian manifold with smooth boundary, and let $q_1, q_2 \in C^{\infty}(M)$. If $f \in C^{\infty}(M)$ satisfies 
\[
\int_M f u_1 u_2 \,dV = 0
\]
for any $u_1, u_2 \in C^{\infty}(M)$ satisfying $(\Delta_g + q_j)u_j = 0$ in $M$, is it true that $f \equiv 0$?
\end{Question}

In other words, one would like to prove that the set $\{ u_1 u_2 \,:\, (\Delta_g + q_j)u_j = 0 \}$ is complete (i.e.\ its linear span is dense). This is known when $\dim(M) = 2$ \cite{guillarmou2011calderon} but it is an open problem for $\dim(M) \geq 3$. For Euclidean domains this was proved in \cite{sylvester1987global}, and the works \cite{DosSantosFerreira2009, ferreira2013calderon} establish this when $(M,g)$ is conformally transversally anisotropic (CTA, see Definition \ref{def_cta} below)  and the transversal manifold has injective geodesic X-ray transform. See \cite{CFO2023} for related rigidity results and \cite{UhlmannWang, MaSahooSalo} for results at a large fixed frequency.

One can also consider the linearized version of the above inverse problem (linearized at zero potential). This corresponds to completeness of products of harmonic functions:

\begin{Question}[Linearized problem]
Let $(M,g)$ be a compact Riemannian manifold with smooth boundary. If $f \in C^{\infty}(M)$ satisfies 
\[
\int_M f u_1 u_2 \,dV = 0
\]
for any $u_1, u_2 \in C^{\infty}(M)$ satisfying $\Delta_g u_j = 0$ in $M$, is it true that $f \equiv 0$?
\end{Question}

This problem is also open when $\dim(M) \geq 3$ but there are partial results for CTA manifolds with weaker assumptions on the transversal manifold \cite{DKLLS2020, KLS2022}. There is also a result for K\"ahler manifolds \cite{guillarmou2019linearized} based on extending the methods from $\dim(M) = 2$ to higher complex dimensions.

The results above are based on constructing special complex geometrical optics (CGO) solutions to $(\Delta_g+q)u = 0$, and on reducing the uniqueness result to inverting certain transforms. Transforms that have been used in this context include 
\begin{itemize}
\item 
the Fourier transform \cite{sylvester1987global}
\item 
a mixed Fourier/geodesic X-ray transform \cite{DosSantosFerreira2009, ferreira2013calderon}
\item 
a mixed Fourier/FBI transform \cite{DKLLS2020, KLS2022}
\item 
a transform related to stationary phase \cite{guillarmou2019linearized}
\end{itemize}

In this work we introduce an alternative approach where completeness of products is obtained from the \emph{Stone-Weierstrass theorem} rather than from inverting explicit transforms. This approach might work also in the absence of CGO type solutions, and thus it avoids one of the main obstacles in the geometric Calder\'on problem. However, the success of this approach relies on finding suitable algebras within the closure of $\mathrm{span}\{ u_1 u_2 : \text{$u_j$ are solutions} \}$. So far we have only been able to implement this in the presence of suitable complex structure, and for our second main result we also use CGO solutions to exhibit such algebras.

Our first result proves uniqueness in the linearized Calder\'on problem on compact K\"ahler manifolds $M$ with smooth boundary. We assume that $M$ is \emph{holomorphically separable}, i.e.\ for any $x, y \in M$ with $x \neq y$ there is $f \in C^{\infty}(M)$ that is holomorphic in $M^{\mathrm{int}}$ such that $f(x) \neq f(y)$. Previously this uniqueness result was proved in \cite{guillarmou2019linearized} (see \cite{MaTzou2021, KrupchykUhlmannYan} for related results) under the additional condition that $M$ has  local charts given by global holomorphic functions, and for measures $\mu = f \,dV$ where $f \in C^{\infty}(M)$ vanishes to high order at the boundary. The proof was based on CGO solutions with Morse phase functions. Here we remove these additional conditions and give a short proof that avoids CGO solutions and is much simpler than the one in \cite{guillarmou2019linearized}.

\begin{Theorem} \label{thm_kaehler}
Let $(M,g)$ be a compact holomorphically separable K\"ahler manifold with smooth boundary. If $\mu$ is a bounded measure on $M$ (i.e.\ $\mu \in (C(M))^*$) and 
\[
\int_M u_1 u_2 \,d\mu = 0
\]
for any $u_1, u_2 \in C^{\infty}(M)$ with $\Delta_g u_j = 0$ in $M$, then $\mu = 0$.
\end{Theorem}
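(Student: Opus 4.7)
The strategy will be to apply the complex Stone-Weierstrass theorem to the algebra generated by products of a holomorphic and an antiholomorphic function, and use that both factors are harmonic on a K\"ahler manifold.

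First I would recall (or verify) the standard fact that on a K\"ahler manifold the Laplace-Beltrami operator on functions satisfies $\Delta_g = 2 \bar{\partial}^*\bar{\partial}$ (up to a sign convention), so that any $f \in C^\infty(M)$ which is holomorphic in $M^{\mathrm{int}}$ satisfies $\Delta_g f = 0$, and likewise for antiholomorphic functions. Consequently, for any $f, g \in C^\infty(M)$ both holomorphic in $M^{\mathrm{int}}$, the functions $u_1 = f$ and $u_2 = \bar{g}$ are admissible test solutions, and the hypothesis gives
\[
\int_M f \bar{g} \, d\mu = 0.
\]

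Next I would introduce
\[
\mathcal{A} = \mathrm{span}_{\mathbb{C}} \bigl\{ f \bar{g} \,:\, f, g \in C^\infty(M) \text{ holomorphic in } M^{\mathrm{int}} \bigr\} \subset C(M;\mathbb{C}),
\]
and verify the three hypotheses of the complex Stone-Weierstrass theorem. Contains constants: take $f = g = 1$. Closed under complex conjugation: $\overline{f\bar g} = g \bar f \in \mathcal{A}$. Closed under multiplication (and hence an algebra): $(f_1 \bar{g_1})(f_2 \bar{g_2}) = (f_1 f_2)\overline{(g_1 g_2)}$, and products of holomorphic functions are holomorphic. Separates points: given $x \ne y$ in $M$, the holomorphic separability assumption supplies $f \in C^\infty(M)$ holomorphic in $M^{\mathrm{int}}$ with $f(x) \ne f(y)$, and $f = f \cdot \bar 1 \in \mathcal{A}$. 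By the complex Stone-Weierstrass theorem, $\mathcal{A}$ is dense in $C(M;\mathbb{C})$ in the uniform topology.

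Finally, since $\int_M h \, d\mu = 0$ for every $h \in \mathcal{A}$ by the previous paragraph, and since $\mu \in C(M)^*$ is continuous for the uniform topology, density gives $\int_M h \, d\mu = 0$ for every $h \in C(M;\mathbb{C})$, which forces $\mu = 0$.

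The main conceptual step, and the only one with any real content, is the verification that the candidate algebra $\mathcal{A}$ separates points — this is precisely where the holomorphic separability hypothesis enters, and where the earlier hypothesis in \cite{guillarmou2019linearized} of the existence of global holomorphic charts is weakened. The remaining steps — closure under conjugation and multiplication, harmonicity of holomorphic/antiholomorphic functions on a K\"ahler manifold, and the duality argument — are standard. The only subtlety to watch is that holomorphic separability as stated only requires smoothness on $M$ and holomorphy in the interior, which is exactly the regularity needed for $f$ and $\bar g$ to qualify as admissible solutions $u_1, u_2 \in C^\infty(M)$ with $\Delta_g u_j = 0$ in $M^{\mathrm{int}}$.
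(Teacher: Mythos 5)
Your proof is correct and follows essentially the same route as the paper: form the algebra of sums of products of a holomorphic and an antiholomorphic function, note these factors are harmonic on a K\"ahler manifold, check the hypotheses of the complex Stone--Weierstrass theorem (with point separation supplied by the holomorphic separability assumption), and conclude by density and duality. No gaps.
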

\begin{proof}
Consider the set 
\[
A = \{ h_1 a_1 + \cdots + h_N a_N \,;\, \text{$h_j \in C^{\infty}(M)$ holomorphic, $a_j \in C^{\infty}(M)$ antiholomorphic, $N \geq 1$} \}.
\]
This set is a subalgebra of $C(M)$ since products of holomorphic (resp.\ antiholomorphic) functions are holomorphic (resp.\ antiholomorphic). Moreover, $A$ is unital and closed under complex conjugation. By assumption $A$ separates points in $M$. By the complex Stone-Weierstrass theorem (see e.g. \cite[Chapter 3, Theorem 1.4]{La93}), $A$ is dense in $C(M)$.

Since holomorphic and antiholomorphic functions on a K\"ahler manifold are harmonic, our assumption on $\mu$ implies that $\int_M w \,d\mu = 0$ for any $w \in A$. Since $A$ is dense in $C(M)$, we obtain $\mu = 0$.
\end{proof}

In Section \ref{sec_kaehler} we show that the holomorphic separability condition (stated for holomorphic functions near $M$) is equivalent to the existence of a smooth strictly plurisubharmonic function on $M$, and that this condition fails for manifolds $M$ such as a compact neighborhood of the equator in $\mC P^n$. We also show that the second assumption in \cite{guillarmou2019linearized} related to local charts given by global holomorphic functions actually follows from a suitable version of holomorphic separability. We remark that harmonic functions always separate points in $M$ e.g.\ by Runge approximation \cite{lassas2018poisson}, so the holomorphic separability assumption is probably just an artifact of our method of proof.

The fact that one has uniqueness in the linearized problem yields uniqueness in inverse problems for nonlinear equations by the method of higher order linearization. Below is an example of such a result (see e.g.\ \cite{SaloTzou2023} for more details). It is likely that one could also consider potentials in $L^p(M)$ for suitable $p$ as in \cite{nurminen2023determining}.

\begin{Corollary}
Let $(M,g)$ be a compact holomorphically separable K\"ahler manifold with smooth boundary, let $q_1, q_2 \in C^{\alpha}(M)$, and let $m \geq 2$ be an integer. Let $\Lambda_{q_j}$ be the DN map for the equation $\Delta_g u + q u^m = 0$ in $M$ with small Dirichlet data. If $\Lambda_{q_1}=\Lambda_{q_2}$, then $q_1=q_2$. This works even when the Neumann data is measured at a single point $x_0 \in \p M$.
\end{Corollary}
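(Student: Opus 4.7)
The plan is to apply the method of higher order linearization at the zero solution and reduce the uniqueness of $q$ to Theorem~\ref{thm_kaehler}. For $q \in C^\alpha(M)$ and small $f \in C^{2,\alpha}(\p M)$, the implicit function theorem in H\"older spaces provides a unique small solution $u_q(f) \in C^{2,\alpha}(M)$ of $\Delta_g u + q u^m = 0$ depending smoothly on $f$. I would insert Dirichlet data $f = \sum_{i=1}^m \epsilon_i f_i$ and differentiate the equation $m$ times in $\epsilon$ at $\epsilon = 0$. Since $q u^m$ vanishes to order $m \geq 2$ at $u = 0$, the first-order terms $u_i := \p_{\epsilon_i} u_q|_{\epsilon=0}$ are harmonic with $u_i|_{\p M} = f_i$, and the nonlinear contribution first appears at order $m$, giving
\[
\Delta_g V_q = -m!\, q\, u_1 u_2 \cdots u_m, \qquad V_q|_{\p M} = 0,
\]
where $V_q := \p_{\epsilon_1} \cdots \p_{\epsilon_m} u_q|_{\epsilon=0}$.

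Next, I would take differences. Differentiating the identity $\Lambda_{q_1}(f) = \Lambda_{q_2}(f)$ in the parameters yields $\p_\nu V_{q_1}|_{\p M} = \p_\nu V_{q_2}|_{\p M}$, so $W := V_{q_1} - V_{q_2}$ satisfies $W|_{\p M} = \p_\nu W|_{\p M} = 0$ and $\Delta_g W = -m!(q_1 - q_2) u_1 \cdots u_m$. Pairing with an arbitrary harmonic function $u_{m+1}$ and applying Green's identity produces
\[
\int_M (q_1 - q_2)\, u_1 u_2 \cdots u_{m+1}\, dV = 0
\]
for all harmonic $u_1, \ldots, u_{m+1}$. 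Since $m \geq 2$, choosing $u_3, \ldots, u_{m+1}$ to be the constant function $1$ reduces this to $\int_M u_1 u_2\, d\mu = 0$ for all harmonic $u_1, u_2$ with $d\mu = (q_1 - q_2)\, dV$. Theorem~\ref{thm_kaehler} then forces $q_1 = q_2$.

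For the single-point version, only $\p_\nu W(x_0) = 0$ is available rather than the full boundary identity. I would represent $W$ through the Dirichlet Green's function $G$ for $\Delta_g$ on $(M, g)$, writing $W(x) = \int_M G(x, y)\, \Delta_g W(y)\, dV(y)$, and take the normal derivative at $x_0$ to obtain
\[
0 = \p_\nu W(x_0) = -m! \int_M P(y)\, (q_1 - q_2)(y)\, u_1(y) \cdots u_m(y)\, dV(y),
\]
where $P(y) := \p_{\nu_x} G(x_0, y)$ is, up to sign, the Poisson kernel in $y$ with boundary trace $\delta_{x_0}$. The strong maximum principle shows that $P$ is strictly positive (or strictly negative) throughout $M^{\mathrm{int}}$. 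Applying Theorem~\ref{thm_kaehler} to $d\mu = P\, (q_1 - q_2)\, dV$ forces $P(q_1 - q_2) \equiv 0$, and the nonvanishing of $P$ in the interior gives $q_1 = q_2$ there, hence on all of $M$ by continuity.

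The main obstacle I expect is not the algebraic identity, which is immediate from higher order linearization, but the regularity bookkeeping needed to justify the $m$-fold $\epsilon$-derivative and the Green's function representation when $q$ is only $C^\alpha$. This is standard Schauder theory and follows the pattern in the references already cited above; once that is in place, the reduction to Theorem~\ref{thm_kaehler} is clean.
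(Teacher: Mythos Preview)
Your proposal is correct and is precisely the higher order linearization argument the paper invokes (the paper does not spell out a proof but refers to \cite{SaloTzou2023}); the reduction to Theorem~\ref{thm_kaehler} by setting the extra harmonic factors equal to $1$, and the Poisson-kernel trick for the single-point measurement, are exactly the intended steps. The only point worth flagging is that for the single-point case you should note that the Poisson kernel $P(\cdot) = \p_{\nu_x} G(x_0,\cdot)$ is in $L^1(M)$ despite its boundary singularity at $x_0$, so that $d\mu = P(q_1-q_2)\,dV$ is indeed a bounded measure and Theorem~\ref{thm_kaehler} applies.
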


As another result, we give a simplified proof of the completeness of products of four harmonic functions on a CTA manifold $(M,g)$.

\begin{Definition}[\cite{ferreira2013calderon}] \label{def_cta}
Let $(M,g)$ be a compact manifold with smooth boundary. We say that $(M,g)$ is \emph{transversally anisotropic} (TA) if $M \subset \subset \mR \times M_0$ and $g = e \oplus g_0$, where $e$ is the Euclidean metric on $\mR$ and $(M_0,g_0)$ is a compact manifold with smooth boundary. We say that $(M,g)$ is \emph{conformally transversally anisotropic} (CTA) if $(M,c^{-1} g)$ is TA for some smooth positive function $c$.
\end{Definition}

The following result was proved for a TA manifold in \cite{LASSAS202144} when $f \in C^1(M)$ and $u_j$ solve $\Delta_g u_j = 0$, and for a CTA manifold in \cite{FEIZMOHAMMADI20204683} with an additional assumption on the invertibility of a certain weighted geodesic ray transform.  For a general CTA manifold with $f \in C^{1,1}(M)$ this result follows from \cite{KrupchykUhlmann2023_APDE}. Our proof avoids all stationary and non-stationary phase arguments in the previous proofs and works for $f \in C(M)$.

\begin{Theorem}\label{thm:density_of_4}
	Let $(M,g)$ be a compact CTA manifold with smooth boundary, and let $V_j \in C^\infty(M)$ for $1 \leq j \leq m$ where $m \geq 4$. If $q \in C(M)$  satisfies 
	\begin{align}\label{integral id to zero on CTA}
	\int_M q \s u_1 \cdots u_m \,dV = 0
	\end{align}
	for any $u_j \in C^{\infty}(M)$ solving $(\Delta_g+V_j) u_j = 0$ in $M$, then $q \equiv 0$.
\end{Theorem}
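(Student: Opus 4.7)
The plan is to extend the Stone-Weierstrass argument of Theorem~\ref{thm_kaehler} to the CTA setting, building the required dense subalgebra of $C(M)$ out of limits of four-fold products of complex geometrical optics (CGO) solutions; the general case $m \geq 4$ then follows by incorporating fixed nonvanishing solutions as the extra factors, or by extending the CGO construction to $m$ factors with phases summing to zero. A standard conformal change $\tilde u = c^{(n-2)/4} u$ with $\tilde g = c^{-1} g$ reduces $(\Delta_g + V_j) u_j = 0$ on the CTA manifold to the analogous equation $(\Delta_{\tilde g} + \tilde V_j) \tilde u_j = 0$ on the TA representative $(M, \tilde g)$ with $\tilde V_j \in C^\infty(M)$, and transforms \eqref{integral id to zero on CTA} into an identity against a continuous density $\tilde q$. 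I therefore assume that $(M, g)$ is TA with coordinates $(x_1, x') \in \mR \times M_0$ and $g = dx_1^2 \oplus g_0$.

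The main input is the standard family of CGO solutions $u_j = e^{-\tau \rho_j}(a_j + r_{j, \tau})$ to $(\Delta_g + V_j) u_j = 0$, where $\tau > 0$ is large, $\rho_j = \pm(x_1 + i\varphi)$ with $\varphi$ a local solution of the eikonal equation $|d\varphi|_{g_0}^2 = 1$ on $M_0$, the amplitudes $a_j$ satisfy the associated first-order transport equation along the complex characteristic $\partial_{x_1} \pm i \nabla_{g_0}\varphi$ (with free data transverse to the characteristic), and $r_{j, \tau}$ is of lower order in $\tau$. I choose the four phases in $\pm$ pairs so that $\rho_1 + \rho_2 + \rho_3 + \rho_4 \equiv 0$; the exponentials then cancel in the product and
\[
u_1 u_2 u_3 u_4 \longrightarrow a_1 a_2 a_3 a_4 \quad \text{in } L^1(M) \text{ as } \tau \to \infty.
\]
Combining with \eqref{integral id to zero on CTA} and the continuity of $\tilde q$ yields $\int_M \tilde q \, a_1 a_2 a_3 a_4 \, dV = 0$ for every admissible quadruple $(a_1, a_2, a_3, a_4)$ and every admissible local phase $\varphi$.

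I would then show that the linear span of these limit products contains a subalgebra $A \subseteq C(M)$ which is closed under complex conjugation (via replacing CGOs by their complex conjugates, whose phases are $\bar \rho_j$), non-vanishing at every point of $M$, and separates the points of $M$: separation along $x_1$ is provided by the free holomorphic dependence of each $a_j$ on $z = x_1 + i\varphi$, while separation in $M_0$ comes from varying $\varphi$ over finitely many local eikonal phases covering $M_0$ together with the free transversal data in the transport equation. The complex Stone-Weierstrass theorem, exactly as in the proof of Theorem~\ref{thm_kaehler}, then forces $\overline{A} = C(M)$ uniformly, hence $\tilde q \equiv 0$ and $q \equiv 0$.

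The hard part will be establishing the algebra property in this last step. Unlike the Kähler situation of Theorem~\ref{thm_kaehler}, a general TA manifold carries no global holomorphic structure that makes $A$ automatically closed under products, so one has to arrange the limit products $a_1 a_2 a_3 a_4$ associated with different local phases (and the weight factors coming from the inhomogeneous transport equation) to combine into a genuine subalgebra of $C(M)$. This is also where the hypothesis $m \geq 4$ is used: four factors give enough room to cancel the leading exponentials in pairs while leaving two independent pairs of amplitudes, whereas $m = 2$ leaves only a single amplitude pair and no such algebra can be produced by this device.
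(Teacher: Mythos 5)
Your overall skeleton (reduce to the TA case, take four CGO solutions whose real exponential growth cancels in pairs, pass to the limit $\tau\to\infty$, and finish with Stone--Weierstrass) matches the paper's strategy, but two essential points are either wrong or left open. First, the claimed convergence $u_1u_2u_3u_4 \to a_1a_2a_3a_4$ in $L^1(M)$ presupposes WKB-type amplitudes with a \emph{real} eikonal phase $\varphi$ defined on all of $M_0$; such a global phase does not exist on a general transversal manifold, which is exactly why the CGOs one actually has are Gaussian beam quasimodes $v_s = e^{is\psi}a$ concentrated along a fixed nontangential geodesic $\gamma$, with $\mathrm{Im}(\nabla^2\psi)>0$ transversally. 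The product of four such solutions carries the factor $e^{-4\tau\,\mathrm{Im}(\psi)}$ and, after the normalization $\tau^{(n-2)/2}$, converges not to a function on $M$ but to a weighted integral of $q$ over the two-dimensional leaf $\Gamma = I\times\gamma$. So the limiting identity lives on $\Gamma$, not on $M$, and your subsequent plan to exhibit a dense subalgebra of $C(M)$ starts from a false premise.

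Second, the step you yourself flag as ``the hard part'' --- producing a conjugation-closed, point-separating subalgebra --- is precisely where the paper's actual idea lies, and your proposal does not supply it. The resolution is to apply Stone--Weierstrass \emph{on the leaf}, not on $M$: choosing the spectral parameters $s=\tau+i\lambda_1$ and $s=\tau+i\lambda_2$ in the first and third solutions leaves, after the limit, the factors $e^{i\lambda_1 z}$ and $\overline{e^{i\lambda_2 z}}$ with $z = x_1+it$ ($t$ the arclength along $\gamma$), so that differentiating in $\lambda_1,\lambda_2$ at $0$ yields all moments $\int z^a\bar z^b\,\tilde q = 0$; the algebra generated by $z$ and $\bar z$ is trivially unital, conjugation-closed and point-separating on $I\times[0,T]\subset\mC$, giving $q=0$ on $I\times\gamma$. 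One then varies $\gamma$ over a dense family of nontangential geodesics to conclude $q\equiv 0$ on $M$, and one must additionally control the cross terms arising from self-intersections of $\gamma$ (they are $O(\tau^{-1/2})$ because the sum of the Hessians of the phases at an intersection point is positive definite in all transversal directions). Without the localization to the leaf and the explicit identification of the algebra there, your argument has no route to completion; your heuristic for why $m\geq 4$ is needed is, however, consistent with the paper's remark that $m=2$ only produces the holomorphic monomials $z^l$, which do not form a conjugation-closed algebra.
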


The main geometric simplification in our proof is the following. Consider the case $m=4$ and let $u_j$ be suitable CGO solutions as in \cite{LASSAS202144} that depend on a large parameter $\tau > 0$. Instead of looking at two intersecting geodesics on the transversal manifold, which produces pointwise concentration of $u_1 u_2 u_3 u_4$ at the intersection points as in \cite{LASSAS202144, KrupchykUhlmann2023_APDE}, we use a fixed geodesic $\gamma$ on the transversal manifold $M_0$. Let us consider here the case where $\gamma$ does not self-intersect for simplicity. Then the product $u_1 u_2 u_3 u_4$ concentrates on a two-dimensional manifold $\Gamma = \mR \times \gamma$.  We then use the fact that the amplitudes of the CGO solutions are holomorphic or antiholomorphic functions on $\Gamma$. This yields the limit 
\[
0 = \lim_{\tau \to \infty} \int_M f \s u_1 u_2 u_3 u_4 \,dV = \int_{\Gamma} f a_1 a_2 a_3 a_4 \,dV_{\Gamma},
\]
where $a_1, a_2$ are holomorphic functions on $\Gamma$ and $a_3, a_4$ are antiholomorphic on $\Gamma$, and $dV_\Gamma$ is a positive multiple of the Riemannian volume form on $\Gamma$. More specifically, the construction gives amplitudes $a_1 = e^{i\lambda_1 z}$, $a_3 = e^{i \lambda_2 \bar{z}}$ and $a_2 = a_4 = 1$ where $\lambda_j \in \mR$ are free parameters and $z$ is a complex coordinate on $\Gamma$. The set $A = \mathrm{span} \{ a_1 a_2 a_3 a_4 \}$ is a unital subalgebra of $C(\Gamma)$ that is closed under complex conjugation and separates points. The Stone-Weierstrass theorem then implies that $A$ is dense in $C(\Gamma)$, which implies that $q|_{\Gamma}=0$. Repeating this argument for many maximal geodesics $\gamma$ on $M_0$ implies that $q=0$ everywhere.

Theorem \ref{thm:density_of_4} allows us to relax the regularity assumptions on the unknowns in the main theorems of \cite{LASSAS202144, FEIZMOHAMMADI20204683, KrupchykUhlmann2023_APDE}:

\begin{Corollary}\label{cor:CTA}
Let $(M,g)$ be a compact CTA manifold with smooth boundary. Let $q_1, q_2 \in C(M)$, and let $m \geq 3$ be an integer. Let $\Lambda_{q_j}$ be the DN map for the equation $\Delta_g u + q u^m = 0$ in $M$ with small Dirichlet data. If $\Lambda_{q_1}=\Lambda_{q_2}$, then $q_1=q_2$.
\end{Corollary}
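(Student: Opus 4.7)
The plan is to reduce this nonlinear inverse problem to the linear integral identity of Theorem \ref{thm:density_of_4} via the standard method of higher order linearization. The key trick for treating the borderline case $m=3$ uniformly is that pairing against an auxiliary harmonic function will convert the $m$-factor identity coming from the $m$-th linearization into an identity with $m+1 \geq 4$ harmonic factors.

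First I would set up the small-data forward problem. For $f_1,\ldots,f_m \in C^{\infty}(\p M)$ and small real parameters $\epsilon=(\epsilon_1,\ldots,\epsilon_m)$, the implicit function theorem in $C^{2,\alpha}(M)$ gives a unique small solution $u_i(x;\epsilon)$ of $\Delta_g u_i + q_i u_i^m = 0$ with $u_i|_{\p M} = \sum_j \epsilon_j f_j$, depending smoothly on $\epsilon$ (continuity of $q_i$ is enough since multiplication by $q_i$ preserves $C^{2,\alpha}$; see e.g.\ \cite{LASSAS202144, KrupchykUhlmann2023_APDE}). Setting $v_j := \p_{\epsilon_j} u_i|_{\epsilon=0}$ and using $u_i|_{\epsilon=0}=0$ together with $m\geq 2$, a single differentiation of the equation shows $v_j$ is harmonic with $v_j|_{\p M}=f_j$, independent of $i$.

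Next I would take the full $m$-th mixed derivative. With $w_i := \p_{\epsilon_1}\cdots\p_{\epsilon_m} u_i|_{\epsilon=0}$, the only nonzero contribution in $\p_{\epsilon_1}\cdots\p_{\epsilon_m}(u_i^m)|_{\epsilon=0}$ is the combinatorial term in which each of the $m$ factors of $u_i$ absorbs exactly one $\p_{\epsilon_j}$, since any other distribution of derivatives leaves an undifferentiated copy of $u_i$, which vanishes at $\epsilon=0$. This yields
\[
\Delta_g w_i + m!\, q_i\, v_1 v_2 \cdots v_m = 0, \qquad w_i|_{\p M} = 0.
\]
The equality $\Lambda_{q_1}=\Lambda_{q_2}$ gives $\p_\nu u_1=\p_\nu u_2$ on $\p M$ for all small $\epsilon$, and $m$ differentiations then yield $\p_\nu w_1 = \p_\nu w_2$ on $\p M$. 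For any harmonic $v_0 \in C^{\infty}(M)$ with arbitrary boundary values, Green's identity applied to $v_0$ and $w_1-w_2$ produces
\[
m! \int_M (q_1-q_2)\, v_0 v_1 \cdots v_m \, dV \;=\; -\int_{\p M} v_0 (\p_\nu w_1 - \p_\nu w_2)\, dS \;=\; 0.
\]
Since $f_1,\ldots,f_m$ and the boundary data of $v_0$ are arbitrary, this identity holds for arbitrary harmonic $v_0, v_1, \ldots, v_m$, i.e.\ for a product of $m+1 \geq 4$ harmonic factors. Theorem \ref{thm:density_of_4}, applied with all $V_j=0$ and with its $m$ taken to be our $m+1$, then forces $q_1-q_2 \equiv 0$.

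I anticipate no serious obstacle. The main technical point is the smoothness in $\epsilon$ of the forward map with merely continuous $q_i$, which is handled by working in $C^{2,\alpha}(M)$ and observing that the nonlinear map $(\epsilon, u)\mapsto \Delta_g u + q_i u^m$ is jointly $C^\infty$ in $u$ and $\epsilon$ even for $q_i \in C(M)$. The combinatorial identification of the leading $m$-th linearization term is routine, and the entire geometric content of the argument is absorbed into Theorem \ref{thm:density_of_4}.
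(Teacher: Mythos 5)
Your proposal follows essentially the same route as the paper: higher order linearization of the DN map to produce the integral identity with $m+1\geq 4$ harmonic factors (using an auxiliary harmonic $v_0$ in Green's identity), followed by an application of Theorem \ref{thm:density_of_4} with all $V_j=0$. The paper's proof is exactly this, compressed to one line plus a citation. However, the single point the paper flags as ``the only thing to check'' --- well-posedness and smooth $\epsilon$-dependence of the solution map when $q_i$ is merely continuous --- is precisely where your justification is incorrect: multiplication by a function that is only continuous does \emph{not} preserve $C^{2,\alpha}(M)$ (the product $q_i u^m$ is merely continuous, not H\"older continuous), so the implicit function theorem cannot be run in $C^{2,\alpha}$ as you state, and elliptic Schauder theory would not even close the loop for the solution itself. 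The standard fix is to set up the forward map in $W^{2,p}(M)$ with $p>n$ (so that $W^{2,p}\hookrightarrow C^1$ and $u\mapsto q_i u^m$ maps boundedly and smoothly into $L^p$); this is what \cite{nurminen2023determining} does, even for $q_j\in L^{n/2+\eps}$, and the paper simply cites that argument and notes it transfers verbatim to the Riemannian setting. The remainder of your argument --- the first linearization giving harmonic $v_j$ independent of $i$, the combinatorics of the $m$-th mixed derivative yielding $\Delta_g w_i + m!\,q_i v_1\cdots v_m=0$, and the Green's identity pairing --- is correct and matches the intended proof.
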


\begin{Remark}
One could also consider the following refinement of Theorem \ref{thm:density_of_4}. Let $(M,g)$ be a compact manifold with smooth boundary, and suppose that 
\[
\int_M q u_1 u_2 u_3 u_4 \,dV = 0
\]
whenever $\Delta_g u_j = 0$ (or $(\Delta_g+V_j)u_j = 0$) in $M$. It is likely that one could prove that $q|_{\Gamma} = 0$ whenever $\Gamma$ is a good bicharacteristic leaf for some limiting Carleman weight on $(M,g)$ in the sense of \cite[Definition 1.2]{Salo_normalforms}, if the definition of a good bicharacteristic leaf is  adapted to products of four quasimodes instead of two. We recall that on a TA manifold the good bicharacteristic leaves cover $M$ up to a set of measure zero \cite[Theorem 1.2]{Salo_normalforms}, so this would indeed generalize Theorem \ref{thm:density_of_4}.
\end{Remark}

\begin{Remark}
We note that Theorem \ref{thm:density_of_4} also works in $\R^2$. That is, for products of four solutions one can use standard CGO solutions with linear phase  also in two dimensions to get density results. This is in contrast with the case of products of two solutions, where one has to use different CGO solutions with quadratic type phases in two dimensions. 
\end{Remark}

This article is organized as follows. In Section \ref{sec_euclidean} we revisit the Euclidean case and discuss density results for products of harmonic functions by using the Stone-Weierstrass theorem. In Section \ref{sec_kaehler} we consider the case of K\"ahler manifolds and study the relation of holomorphic separability, plurisubharmonic functions and the existence of charts given by global holomorphic functions. This involves some methods from the characterization of Stein manifolds via plurisubharmonic functions based on the $L^2$ estimates of H\"ormander. Finally in Section \ref{sec:CTA_proofs} we consider the case of CTA manifolds and give the proof of Theorem \ref{thm:density_of_4}.

\section*{Acknowledgements}

The authors would like to thank Bo Berndtsson for a helpful remark related to plurisubharmonic functions. T.L. was supported by the Academy of Finland (Centre of Excellence in
Inverse Modeling and Imaging, grant numbers 284715 and 309963). The research of M.S.~is supported by Research Council Finland (CoE in Inverse Modelling and Imaging and FAME flagship, grants 353091 and 359208).

\section{The Euclidean case} \label{sec_euclidean}

In this short section we review, from the point of view of the Stone-Weierstrass theorem, some classical completeness results for products of special harmonic functions in subsets of $\mR^n$. The case of exponential harmonic functions may be found in \cite{Calderon1980} and the case of Green functions follows from \cite{Riesz1938} (see the discussion in \cite{Isakov_book_1990} or \cite{ghosh2020calderon}). We also prove completeness of products of harmonic homogeneous polynomials; we do not know if this result has appeared before in the literature.

\begin{Theorem}
Let $\Omega \subset \mR^n$, $n \geq 2$, be a bounded open set (with $n \geq 3$ in the case of $S_2$ below). Then the sets 
\begin{align*}
S_1 &= \{ u_1 u_2 \,:\, u_j = e^{\rho_j \cdot x}, \ \rho_j \in \mC^n, \ \rho_j \cdot \rho_j = 0 \}, \\
S_2 &= \{ u_1 u_2 \,:\, u_j(x) = |x-x_j|^{2-n}, \ x_j \in \mR^n \setminus \ol{\Omega} \}, \\
S_3 &= \{ u_1 u_2 \,:\, \text{$u_j$ is a harmonic homogeneous polynomial in $\mR^n$} \}
\end{align*}
are complete in $C(\ol{\Omega})$.
\end{Theorem}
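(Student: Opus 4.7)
The plan is to treat $S_1$ directly by the complex Stone--Weierstrass theorem and to reduce the other two cases to it. For $S_1$, I would invoke Calder\'on's splitting: given any $\zeta \in \mC^n$ with $n \geq 2$, one can find $\rho_j \in \mC^n$ with $\rho_j \cdot \rho_j = 0$ and $\rho_1 + \rho_2 = \zeta$ by writing $\rho_j = \zeta/2 \pm \eta$ with $\eta$ bilinearly orthogonal to $\zeta$ and $\eta \cdot \eta = -\zeta \cdot \zeta/4$ (such $\eta$ exists for $n \geq 2$). Hence $\mathrm{span}(S_1)$ contains every exponential $e^{\zeta \cdot x}$ with $\zeta \in \mC^n$. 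This linear span is already a subalgebra of $C(\ol\Omega)$ via $e^{\zeta_1 \cdot x} e^{\zeta_2 \cdot x} = e^{(\zeta_1+\zeta_2) \cdot x}$; it contains the constants (take $\zeta=0$), is closed under complex conjugation since $\ol{e^{\zeta \cdot x}} = e^{\ol{\zeta} \cdot x}$ on real $x$, and separates points (given distinct $x, x' \in \ol\Omega$ pick $\zeta$ along any coordinate direction where they differ). The complex Stone--Weierstrass theorem then closes this case.

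For $S_3$, a Taylor expansion feeds the result back into $S_1$. For any $\rho \in \mC^n$ with $\rho \cdot \rho = 0$, each $(\rho \cdot x)^k$ is a harmonic homogeneous polynomial of degree $k$, since $\Delta (\rho \cdot x)^k = k(k-1)(\rho\cdot\rho)(\rho\cdot x)^{k-2} = 0$. The partial sums of
\[
e^{\rho_1 \cdot x} e^{\rho_2 \cdot x} = \sum_{a,b \geq 0} \frac{(\rho_1 \cdot x)^a (\rho_2 \cdot x)^b}{a!\,b!}
\]
therefore lie in $\mathrm{span}(S_3)$ and converge uniformly on the compact set $\ol\Omega$. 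It follows that $S_1 \subset \ol{\mathrm{span}(S_3)}$, and the $S_1$ case forces density.

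For $S_2$, I would first recall the classical density (due to Riesz) of $B := \mathrm{span}\{|x-y|^{2-n} : y \in \mR^n \setminus \ol\Omega\}$ in $C(\ol\Omega)$ when $n \geq 3$: if $\mu \in (C(\ol\Omega))^*$ annihilates $B$, its Newton potential is harmonic on $\mR^n \setminus \ol\Omega$, vanishes there, and decays at infinity, forcing $\mu = 0$ by unique continuation and the classical uniqueness of Newton potentials. Given this, I would observe that $\mathrm{span}(S_2)$ contains $B \cdot B$: for $f \in C(\ol\Omega)$ pick $g_n \in B$ with $g_n \to 1$ uniformly and $h_n \in B$ with $h_n \to f$ uniformly; then $g_n h_n \in \mathrm{span}(S_2)$ and $g_n h_n \to f$, proving density.

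The only real obstacle is this last case, because $\mathrm{span}(S_2)$ is not itself a subalgebra of $C(\ol\Omega)$ and Stone--Weierstrass does not apply to it directly; the pivot through density of $B$, combined with absorbing an approximation of the constant $1$ into one of the two factors in $S_2$, is what bridges the gap. For $S_1$ and $S_3$ no such obstacle arises, since in each case the span naturally contains (or is contained in the closure of something containing) a subalgebra to which Stone--Weierstrass applies cleanly.
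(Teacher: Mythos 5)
Your arguments for $S_1$ and $S_3$ are correct. For $S_1$ you essentially reproduce the paper's argument (the paper uses only the purely imaginary frequencies $\rho_1=\eta+i\xi$, $\rho_2=-\eta+i\xi$, which already yield the trigonometric algebra $\mathrm{span}\{e^{ix\cdot\xi}\}$; your full Calder\'on splitting produces all exponentials $e^{\zeta\cdot x}$, which is more than needed but harmless). Your treatment of $S_3$ is genuinely different from, and simpler than, the paper's: Taylor-expanding $e^{\rho_1\cdot x}e^{\rho_2\cdot x}$ into the uniformly convergent sum of products $(\rho_1\cdot x)^a(\rho_2\cdot x)^b$ of harmonic homogeneous polynomials reduces $S_3$ to $S_1$ in a way that is uniform in the dimension, whereas the paper splits into even and odd $n$ and, in the odd case, invokes injectivity of the X-ray transform on compactly supported distributions. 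That reduction is a nice improvement.

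The $S_2$ argument, however, has a fatal gap: the set $B=\mathrm{span}\{|x-y|^{2-n}:y\in\mR^n\setminus\ol{\Omega}\}$ is \emph{not} dense in $C(\ol{\Omega})$. Every element of $B$ is harmonic in $\Omega$, and a uniform limit of harmonic functions is harmonic, so $\ol{B}$ lies in the proper closed subspace of functions harmonic in $\Omega$. Concretely, $\mu=\Delta\phi\,dx$ with $\phi\in C^{\infty}_c(\Omega)$, $\phi\not\equiv 0$, is a nonzero bounded measure annihilating $B$, since its Newton potential is a constant multiple of $\phi$ and hence vanishes off $\supp \phi$; this is exactly where your sketch of the Riesz argument breaks down, because vanishing of the potential outside $\ol{\Omega}$ does not force $\mu=0$. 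Consequently you cannot choose $h_n\in B$ with $h_n\to f$ for arbitrary $f\in C(\ol{\Omega})$. What the paper does instead is exploit the full two-parameter family of products: setting $x_1=x_2=z$ gives $\int_{\ol{\Omega}}|z-y|^{4-2n}\,d\mu(y)=0$ for all $z\in\mR^n\setminus\ol{\Omega}$, and expanding at $|z|\to\infty$ after applying powers of $\Delta_z$ and the derivatives $\p_{z_j}$ extracts all polynomial moments $\int y^{\alpha}\,d\mu=0$, whereupon the real Stone--Weierstrass theorem for polynomials finishes. Your scheme could be salvaged by replacing the false density claim with the harmonic Runge theorem ($B$ is dense, uniformly on $\ol{\Omega}$, in the functions harmonic near $\ol{\Omega}$, poles being allowed in every component of the complement) and then approximating the two factors $e^{\rho_1\cdot x}$ and $e^{\rho_2\cdot x}$ separately to reduce $S_2$ to $S_1$; but that is a different and less elementary argument than the one you wrote.
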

\begin{proof}
For $S_1$, we fix $\xi \in \mR^n$ and choose $u_j = e^{\rho_j \cdot x}$ where $\rho_1 = \eta+i\xi$ and $\rho_2 = -\eta + i\xi$ where $\eta \in \mR^n$ satisfies $|\eta| = |\xi|$ and $\eta \cdot \xi = 0$. Then $u_1 u_2 = e^{2i x \cdot \xi}$. This shows that $\mathrm{span}(S_1)$ contains the set 
\[
A = \mathrm{span}\{ e^{ix \cdot \xi} \,:\, \xi \in \mR^n \}.
\]
The set $A$ is a unital subalgebra of $C(\ol{\Omega})$ that is closed under complex conjugation and separates points in $\ol{\Omega}$. Thus $A$ is dense in $C(\ol{\Omega})$ by the Stone-Weierstrass theorem.

To prove that $S_2$ is complete, it is enough to show that if $\mu$ is a bounded measure on $\ol{\Omega}$ satisfying 
\begin{equation} \label{mu_eq}
\int_{\ol{\Omega}} |x-y|^{4-2n} \,d\mu(y) = 0, \qquad x \in \mR^n\setminus \ol{\Omega},
\end{equation}
then $\mu \equiv 0$. Multiplying \eqref{mu_eq} by $|x|^{2n-4}$ and letting $|x| \to \infty$ we obtain 
\[
\int_{\ol{\Omega}} d\mu(y) = 0.
\]
Applying powers of $\Delta_x$ to \eqref{mu_eq} and using that $n \geq 3$ yields 
\[
\int_{\ol{\Omega}} |x-y|^{4-2n-2k} \,d\mu(y) = 0, \qquad k \geq 0, \ x \in \mR^n\setminus \ol{\Omega}.
\]
Now, applying $\p_{x_j}$ to \eqref{mu_eq}, multiplying by a suitable power of $|x|$ and letting $|x| \to \infty$ gives 
\[
\int_{\ol{\Omega}} |x-y|^{4-2n-2} y_j d\mu(y) = \int_{\ol{\Omega}} y_j d\mu(y) = 0, \qquad 1 \leq j \leq n.
\]
Repeating this for various higher order derivatives and taking limits as $|x| \to \infty$ yields 
\[
\int_{\ol{\Omega}} y^{\alpha} d\mu(y) = 0, \qquad \alpha \in \mN^n.
\]
The set $A = \mathrm{span}\{ y^{\alpha} \}$ is a unital subalgebra of $C(\ol{\Omega}, \mR)$ that separates points. Thus the real Stone-Weierstrass theorem implies that $\mu \equiv 0$.

Next we consider $S_3$. Let first $n = 2m$ be even, and write points in $\mR^n$ as $(x_1,y_1, \ldots, x_m, y_m)$. Writing $z_j = x_j + i y_j$ and $z = (z_1, \ldots, z_m)$, we can choose harmonic homogeneous polynomials $u_1 = z^{\alpha}$ and $u_2 = \bar{z}^{\beta}$ where $\alpha, \beta \in \mN^n$. Thus $\mathrm{span}(S_3)$ contains the set 
\[
A = \mathrm{span}\{ z^{\alpha} \bar{z}^{\beta} \,:\, \alpha, \beta \in \mN^n \}.
\]
This is a unital subalgebra of $C(\ol{\Omega})$ that is closed under complex conjugation and separates points. Hence $A$ is dense in $C(\ol{\Omega})$ by the Stone-Weierstrass theorem.

Finally we consider $S_3$ in the case where $n = 2m+1$ is odd. For simplicity we first show that if $f \in C_c(\Omega)$ satisfies 
\[
\int_{\Omega} f u_1 u_2 \,dx = 0
\]
for all harmonic homogeneous polynomials $u_j$, then $f \equiv 0$. We extend $f$ by zero to $\mR^{2m+1}$ and write points in $\mR^{2m+1}$ as $x = (x',t)$ where $x' \in \mR^{2m}$. Choosing $u_j = u_j(x')$, we obtain 
\[
\int_{\mR^{2m}} \left[ \int_{-\infty}^{\infty} f(x',t) \,dt \right] u_1(x') u_2(x') \,dx' = 0.
\]
Now if $u_j(x')$ are harmonic homogeneous polynomials in $\mR^{2m}$, the density argument for even dimensions above (applied in a large ball in $\mR^{2m}$) implies that 
\[
\int_{-\infty}^{\infty} f(x',t) \,dt= 0, \qquad x' \in \mR^{2m}.
\]
This means that the integrals of $f$ along all lines in direction $e_{2m+1}$ vanish. There is nothing special about the direction $e_{2m+1}$, and repeating this argument for other directions implies that the integrals of $f$ over all lines in $\mR^{2m+1}$ must be zero. By injectivity of the X-ray transform \cite{helgason1999radon} we see that $f \equiv 0$ as required. To show that $S_3$ is complete in $C(\ol{\Omega})$ it is enough to apply the argument above with $f$ replaced by a bounded measure $\mu$ on $\ol{\Omega}$ and to use injectivity of the X-ray transform on compactly supported distributions \cite{StefanovUhlmann_book}.
\end{proof}

\section{K\"ahler manifolds} \label{sec_kaehler}

In the introduction we already proved that products of harmonic functions are complete on compact holomorphically separable K\"ahler manifolds with boundary. Here we first give a simple example of complex manifolds that are not holomorphically separable.

\begin{Example}
Let $M$ be a compact complex manifold with $C^{\infty}$ boundary, and suppose that $S$ is a closed (i.e.\ compact without boundary) connected embedded complex submanifold of $M^{\mathrm{int}}$. Then for any points $x, y \in S$ with $x \neq y$ and for any holomorphic function $f$ in $M^{\mathrm{int}}$, the function $f|_S$ is holomorphic in $S$ and hence $f|_S$ is constant in $S$. Thus in such a case holomorphic functions never separate points of $S$.

An explicit example is obtained from the complex projective space $\mC P^n = (\mC^{n+1} \setminus \{0\})/\!\sim$ where $(z_0, \ldots, z_n) \sim (w_0, \ldots, w_n)$ if $(w_0,\ldots,w_n) = \lambda(z_0,\ldots,z_n)$ for some $\lambda \in \mC \setminus \{0\}$. Let $S = \{ [z_0, \ldots, z_n] \,:\, z_n = 0 \}$ be the equator, and let $M = f^{-1}([0, \eps])$ where $\eps > 0$ is small and 
\[
f([z_0,\ldots,z_n]) = \frac{|z_n|^2}{|z|^2}.
\]
Then $M$ is a compact subdomain of $\mC P^n$ with smooth boundary such that $S$ is an embedded complex submanifold of $M^{\mathrm{int}}$. Thus holomorphic functions do not separate points in $M$.
\end{Example}

Next we will show that holomorphic separability (for functions holomorphic near $M$) is equivalent to the existence of a smooth strictly plurisubharmonic function. This is very similar to the argument that a manifold is Stein if and only if it admits a strictly plurisubharmonic function (solution of the Levi problem, see e.g.\ \cite[Theorem 5.2.10]{Hormander_complex}). However, we need to verify that the argument works also for compact manifolds with boundary.

\begin{Definition}
Let $X$ be an open complex manifold. A function $\varphi \in C^{2}(X)$ is plurisubharmonic if for any $x \in X$, the Levi matrix (computed in some complex coordinates at $x$) 
\[
H_{\varphi}(x) =  \left( \frac{\p^2 \varphi(x)}{\p {z_j} \p {\bar{z}_k}} \right)_{j,k=1}^n
\]
is positive semidefinite. We say that $\varphi$ is strictly plurisubharmonic if this matrix is positive definite at each $x \in X$.

More generally, a function $\varphi \in L^1_{\mathrm{loc}}(X)$ is plurisubharmonic if for any $a \in \mC^n$, the distribution  
\[
\sum \frac{\p^2 \varphi}{\p {z_j} \p {\bar{z}_k}} a_j \bar{a}_k
\]
is a nonnegative measure on $X$.
\end{Definition}

If $\dim_{\mC}(M) = 1$, then plurisubharmonic functions are precisely the subharmonic functions, and in general any plurisubharmonic function is subharmonic \cite[{\S}I.5]{Demailly}. We also note that if $f$ is holomorphic, then $|f|^2$ is plurisubharmonic.

We will prove the following equivalence. Below we assume that $M$ is a compact subdomain in an open complex manifold $X$, and we say that a property holds near $M$ if it holds in some open set in $X$ containing $M$.

\begin{Theorem} \label{lem_sep}
Let $M$ be a compact K\"ahler manifold with $C^{\infty}$ boundary. The following are equivalent:
\begin{enumerate}
\item[(a)] 
For any $x, y \in M$ with $x \neq y$, there is a holomorphic function $f$ near $M$ with $f(x) \neq f(y)$. 
\item[(b)]
There is a strictly plurisubharmonic function $\varphi$ near $M$.
\end{enumerate}
\end{Theorem}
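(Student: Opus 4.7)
The plan is to mimic the proof of the equivalence, for an open complex manifold, between being Stein and admitting a smooth strictly plurisubharmonic exhaustion function (Hörmander's Theorem 5.2.10 in \cite{Hormander_complex}), adapting it to compact $M$ with boundary. Throughout I would work inside a fixed open neighborhood $U$ of $M$ in the ambient open complex manifold $X$.

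\emph{Direction $(b) \Rightarrow (a)$.} Given strictly plurisubharmonic $\varphi$ on $U$, the plan is to run Hörmander's $L^2$ $\bar\partial$-argument on a Stein neighborhood of $M$. First I would shrink to an open $V$ with $M \subset V \subset\subset U$ that is strongly pseudoconvex (hence Stein), by taking a regular sublevel set of a convex-increasing modification of $\varphi$ that blows up as one approaches $\partial U$. Then, given $x \neq y$ in $M$, pick a smooth cutoff $\chi$ equal to $1$ near $x$ and $0$ near $y$, and solve $\bar\partial u = \bar\partial \chi$ on $V$ with plurisubharmonic weight $e^{-\psi}$, where $\psi$ has logarithmic poles of strength $n+\varepsilon$ at $x$ and at $y$ (built in local charts from $\log|z-x|^2$ and $\log|z-y|^2$, rendered globally plurisubharmonic on $V$ by adding a large multiple of $\varphi$). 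The $L^2$ weight forces $u(x) = u(y) = 0$, so $f := \chi - u$ is holomorphic on $V$, and in particular near $M$, with $f(x) = 1 \neq 0 = f(y)$.

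\emph{Direction $(a) \Rightarrow (b)$.} Using (a) together with a compactness argument on $M \times M \setminus \Delta$, I would extract a countable family $\{f_k\}_{k \geq 1}$ of holomorphic functions on a common open neighborhood $U \supset M$ that separates the points of $M$. Normalizing so that $\sum_k c_k \|f_k\|_{C^2(M)}^2 < \infty$, the candidate is
\[
\varphi = \sum_{k \geq 1} c_k |f_k|^2,
\]
which converges in $C^2(M)$ and is plurisubharmonic as a sum of the plurisubharmonic $|f_k|^2$. Its Levi form at $x$ equals $\sum_k c_k\, df_k(x) \otimes \overline{df_k(x)}$, so strict plurisubharmonicity at $x$ is equivalent to $\{df_k(x)\}$ spanning the holomorphic cotangent space $T^{*(1,0)}_x M$. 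By compactness of the unit sphere bundle of $T^{(1,0)} M|_M$, spanning at every $x \in M$ will follow from the pointwise statement: for each $x_0 \in M$ and each nonzero $v \in T^{(1,0)}_{x_0} M$ there is some $f$ holomorphic near $M$ with $\langle df(x_0), v \rangle \neq 0$, after which finitely many such $f$ are appended to the family.

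\emph{Main obstacle.} The hard step is this last one: pointwise separation (a) does not obviously imply separation of tangent vectors, since a holomorphic injection can fail to be an immersion at a point (as shown by $z \mapsto (z^3, z^5)$). My plan is to combine (a) with $\bar\partial$-methods to show that $M$ admits a fundamental system of Stein open neighborhoods $V_n \searrow M$: on any such $V_n$, Hörmander's Theorem 5.2.10 then supplies a strictly plurisubharmonic exhaustion whose restriction to $M$ yields $\varphi$. Producing such Stein neighborhoods from (a) is a Cartan-Thullen-type argument, showing that $M$ equals its own holomorphic hull relative to $\mathcal{O}(U)$ and then approximating $M$ from outside by strongly pseudoconvex open sets. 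As a supplementary ingredient, the local Kähler potentials on $M$ can be patched by a partition of unity, with the cross-term errors absorbed into a large multiple of the above sum of squares, to provide strict positivity at any points where the $\{df_k(x)\}$ turn out to be degenerate.
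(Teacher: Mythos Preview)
For $(b)\Rightarrow(a)$ your plan is essentially the paper's: the authors also solve $\bar\partial$ with plurisubharmonic weights carrying logarithmic poles at the prescribed points, using H\"ormander's $L^2$ estimates (they package this as an interpolation result, Theorem~\ref{thm_interpolation_scv}). One technical remark: the paper does \emph{not} first shrink to a Stein neighborhood. It works directly on an open $\Omega\supset M$ where the strictly psh function $\psi$ lives, and handles the singular weight by Demailly's regularization $\tilde\varphi_j\searrow\tilde\varphi$ with $i\partial\bar\partial\tilde\varphi_j\ge -\omega$, compensating via $\chi(\psi)$. Your ``take a regular sublevel set of a convex-increasing modification of $\varphi$ that blows up near $\partial U$'' is not obviously available: nothing forces sublevel sets of $\varphi$ to contain $M$, and engineering an exhaustion with the right shape is itself work. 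This is fixable, but the paper's route is cleaner.

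For $(a)\Rightarrow(b)$ you have correctly located the obstacle---point separation does not give spanning differentials---but neither of your workarounds closes the gap. The Cartan--Thullen idea needs $M$ to equal its $\mathcal O(U)$-hull, which requires separating points of $M$ from points of $U\setminus M$; hypothesis (a) only separates points \emph{within} $M$. The partition-of-unity patching of local K\"ahler potentials also fails: at a point $x_0$ and direction $v$ where all $df_k(x_0)(v)=0$, the Levi form of $\sum c_k|f_k|^2$ vanishes in direction $v$, so no multiple of it can absorb the negative cross-terms $\partial\chi_\alpha\wedge\bar\partial\rho_\alpha+\cdots$ in that same direction.

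The paper's device for $(a)\Rightarrow(b)$ avoids spanning differentials entirely. Fix $x_0$ with local coordinates $z$. Use separability only to build, for each $y$ with $|z(y)|=1$, a holomorphic $f_y$ with $f_y(x_0)=0$, $|f_y(y)|=2$; compactness gives finitely many $f_j$ so that $v_{x_0}=\sum|f_j|^2$ is globally psh, equals $0$ at $x_0$, and is $\ge 2$ on $\{|z|=1\}$. Then set
\[
u_{x_0}=\max\bigl(v_{x_0},\,(|z|^2+1)/3\bigr)\quad\text{on }\{|z|\le 1\},\qquad u_{x_0}=v_{x_0}\text{ elsewhere.}
\]
This is continuous psh globally and \emph{equals} the strictly psh $(|z|^2+1)/3$ near $x_0$. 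Richberg approximation smooths it while preserving strict psh near $x_0$; summing over a finite cover of $M$ gives the desired $\varphi$. The key point you are missing is this $\max$ trick: it lets a local strictly psh function (where strictness is free) be glued to a global psh function (where only non-strict psh is available from $|f_j|^2$) without any control on the $df_j$.
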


One direction is easy and we prove it  following the argument in \cite[Lemma 6.17 in {\S}I]{Demailly}.

\begin{proof}[Proof of first implication in Lemma \ref{lem_sep}]
Assume that (a) holds. Fix $x_0 \in M$. Choose complex coordinates $z$ in a neighborhood $V$ of $x_0$  in $M$ such that $x_0$ corresponds to $0$ and (after scaling if necessary) $V$ contains $\{ |z| \leq 1 \} \cap M$. By (a), for any $y \in \{ |z| = 1 \} \cap M$ there is $f_y$ holomorphic near $M$ with $|f_y(y)| = 2$ and $f_y(x_0) = 0$. Then by compactness there are finitely many functions $f_1, \ldots, f_N$ that are holomorphic near $M$ such that $v_{x_0} := \sum |f_j|^2$ satisfies $v_{x_0}(x_0) = 0$ and $v_{x_0}(y) \geq 2$ for $y \in \{ |z|=1 \} \cap M$. Note that $v_{x_0}$ is $C^{\infty}$, nonnegative and plurisubharmonic in some neighborhood $U$ of $M$. By continuity, one can further choose $U$ so that $v_{x_0}(y) \geq 1$ whenever $y \in \{ |z|=1 \} \cap U$.

Next we define 
\[
u_{x_0}(z) = \left\{ \begin{array}{ll} v_{x_0}(z) & \text{in $U \setminus \{ |z| <1 \}$}, \\[3pt] \max \{ v_{x_0}(z), (|z|^2+1)/3 \} & \text{in $\{ |z| \leq 1\} \cap U$}. \end{array} \right.
\]
Then $u_{x_0} = v_{x_0}$ near $\{ |z|=1 \} \cap U$ and $u_{x_0} = (|z|^2+1)/3$ near $x_0$. The function $u_{x_0}$ is not smooth everywhere, but it is continuous, nonnegative and plurisubharmonic in $U$ \cite[{\S}I.5]{Demailly} and strictly plurisubharmonic near $x_0$. By Richberg's approximation theorem \cite[{\S}I.5]{Demailly} there is a plurisubharmonic function $\tilde{u}_{x_0}$ that is $C^{\infty}$, nonnegative and plurisubharmonic in $U$ and strictly plurisubharmonic in some neighborhood $U_{x_0}$ of  $x_0$.  After covering $M$ by finitely many such neighborhoods $U_{x_0}$, we obtain functions $\tilde{u}_1, \ldots, \tilde{u}_m$ such that $\varphi := \tilde{u}_1 + \cdots + \tilde{u}_m$ is $C^{\infty}$ and strictly plurisubharmonic everywhere near $M$. Thus (b) holds.
\end{proof}

We now move to the implication (b) $\implies$ (a). This is a consequence of an interpolation theorem.

\begin{Theorem} \label{thm_interpolation_scv}
Let $M$ be a compact subset of an open K\"ahler manifold $X$ and suppose that there is a strictly plurisubharmonic $C^{\infty}$ function near $M$. Given $m \geq 0$ and a finite set of points $x_{\nu}$ in $M$, one can find a holomorphic function $f$ near $M$ with prescribed Taylor expansions to order $m$ at each $x_{\nu}$.
\end{Theorem}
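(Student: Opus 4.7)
The plan is to follow H\"ormander's $L^2$-method for $\bar\partial$ with a weight carrying both $L^2$-control on $\bar\partial$-solutions and forced vanishing at the interpolation points. First I would use $\varphi$ to produce a relatively compact Stein neighborhood $\Omega$ of $M$. A sublevel set $\Omega = \{\varphi < c\}$, for a regular value $c$ slightly above $\max_M \varphi$ (after shrinking the domain of $\varphi$ so that this sublevel set is relatively compact in $X$), is strongly pseudoconvex with smooth boundary $\{\varphi = c\}$. Strict plurisubharmonicity of $\varphi$ forbids positive-dimensional compact analytic subvarieties in $\Omega$ by the maximum principle, so by Grauert's theorem $\Omega$ is Stein; equivalently, $\tilde\varphi := \varphi - \log(c-\varphi)$ is a smooth strictly plurisubharmonic exhaustion of $\Omega$, and Steinness follows from H\"ormander's solution of the Levi problem.

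Next I would build a smooth candidate with the prescribed jets. In local complex coordinates $z^{(\nu)} = (z^{(\nu)}_1, \ldots, z^{(\nu)}_n)$ centered at each $x_\nu$, let $P_\nu(z^{(\nu)})$ be the desired holomorphic polynomial of degree $\leq m$, and choose cutoffs $\chi_\nu \in C^\infty_c(\Omega)$ with pairwise disjoint supports and $\chi_\nu \equiv 1$ near $x_\nu$. The function
\[
u := \sum_\nu \chi_\nu P_\nu \in C^\infty(\Omega)
\]
is holomorphic on a neighborhood of each $x_\nu$, so $v := \bar\partial u$ is a smooth $\bar\partial$-closed $(0,1)$-form vanishing identically near each $x_\nu$.

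I would then apply H\"ormander's weighted $L^2$-estimate on the Stein manifold $\Omega$ with the singular plurisubharmonic weight
\[
\psi := A\varphi + (n+m+1)\sum_\nu \chi_\nu \log |z^{(\nu)}|^2,
\]
where $A$ is taken large enough that $\psi$ is strictly plurisubharmonic on $\Omega$: the non-positive contributions to $i\partial\bar\partial \psi$ come only from bounded terms involving $\partial\chi_\nu$ and $\bar\partial\chi_\nu$, compactly supported in the annular regions where $\chi_\nu$ is not locally constant, and these are absorbed by $A\, i\partial\bar\partial\varphi$ once $A$ exceeds a uniform threshold. Since $v$ vanishes near each $x_\nu$, $\int_\Omega |v|^2 e^{-\psi}\, dV < \infty$, and H\"ormander's theorem produces $w \in L^2_{\mathrm{loc}}(\Omega)$ with $\bar\partial w = v$ and $\int_\Omega |w|^2 e^{-\psi}\, dV < \infty$.

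To conclude, elliptic regularity for $\bar\partial$ on $0$-forms gives $w \in C^\infty(\Omega)$. Near each $x_\nu$ the weight $e^{-\psi}$ is comparable to $|z^{(\nu)}|^{-2(n+m+1)}$, and the standard computation that $\int_{|z|<r_0}|z|^{2k-2(n+m+1)}\,dV$ is finite iff $k \geq m+2$, applied to the Taylor expansion of $w$ at $x_\nu$, forces the $m$-jet of $w$ there to vanish. Hence $f := u - w$ is holomorphic on $\Omega$ (in particular near $M$) and has the prescribed Taylor expansion to order $m$ at each $x_\nu$. The main obstacle is the first step, producing the Stein neighborhood from a strictly plurisubharmonic function defined only near $M$; once $\Omega$ is in hand, the choice of singular weight and the extraction of the vanishing order are standard.
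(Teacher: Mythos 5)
Your overall strategy is the same as the paper's (and the standard Ohsawa--Demailly one): write $f = \sum \chi_\nu P_\nu - w$, solve $\dbar w = \dbar(\sum\chi_\nu P_\nu)$ in $L^2$ with a weight having logarithmic poles at the $x_\nu$, and read off the jet vanishing from finiteness of the weighted norm. The weight computation and the extraction of the vanishing order at the end are fine. However, there are two genuine gaps. The first is Step 1, which you yourself flag as the main obstacle: the sublevel set $\{\varphi < c\}$ need not be relatively compact in the neighborhood where $\varphi$ is defined (its closure may reach the boundary of that neighborhood, and ``shrinking the domain'' does not help), so it need not be a strongly pseudoconvex domain with boundary $\{\varphi = c\}$. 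Worse, a Stein neighborhood of $M$ inside $X$ may simply not exist under the hypotheses: take $X$ to be the spherical shell $\{1/2 < |z| < 2\} \subset \mC^2$, $M = \{|z|=1\}$ and $\varphi = |z|^2$; by Hartogs extension no open set between $M$ and $X$ is a domain of holomorphy. The theorem is still true in that example (polynomials interpolate), but your route through Grauert's theorem is blocked.

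The second gap is the application of H\"ormander's theorem with the weight $\psi = A\varphi + (n+m+1)\sum\chi_\nu\log|z^{(\nu)}|^2$, which has logarithmic poles and is therefore not covered by the smooth-weight $L^2$ existence theorems you are implicitly invoking. This is exactly the point where the K\"ahler hypothesis must enter, and your argument never uses it --- a warning sign. The paper handles this by \emph{not} constructing a Stein neighborhood at all: it regularizes the singular plurisubharmonic weight by a decreasing sequence of smooth functions with $i\partial\dbar\tilde\varphi_j \geq -\omega$ using Demailly's approximation theorem (valid on K\"ahler manifolds), restores uniform strict positivity by adding a large convex function of the given strictly plurisubharmonic $\psi$, solves $\dbar$ for each smooth weight via H\"ormander, and passes to the limit by weak compactness and monotone convergence. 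If your $\Omega$ really were Stein you could instead quote a singular-weight version of H\"ormander's theorem (or regularize plurisubharmonic functions by decreasing smooth ones, which is possible on Stein manifolds without the K\"ahler assumption), so the two gaps are linked: fixing the second without a Stein neighborhood requires precisely the Demailly regularization step that your write-up omits.
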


We will give a proof based on plurisubharmonic weights having logarithmic singularities. See \cite[Theorem 5.3]{Ohsawa} or \cite[Theorem 9.18]{Demailly_notes} for corresponding results on open pseudoconvex manifolds. The proof involves a scheme for approximating singular plurisubharmonic functions by smooth ones \cite{Demailly1982_approximation} (this is the only point where we need that the manifold is K\"ahler). It is convenient to state plurisubharmonicity in more invariant terms. If $d = \partial + \dbar$ where $\partial$ and $\dbar$ are the Dolbeault operators, one has 
\[
i \partial \dbar \varphi = i \frac{\p^2 \varphi(x)}{\p {z_j} \p {\bar{z}_k}} \,dz^j \wedge d\bar{z}^k.
\]
A $(1,1)$-form $u = i u_{jk} \,dz^j \wedge d\bar{z}^k$ is said to be positive semidefinite if the matrix $(u_{jk})$ is positive semidefinite. In this case we write $u\geq 0$. Then $\varphi$ is plurisubharmonic if $i \partial \dbar \varphi \geq 0$. Moreover, if $X$ is a K\"ahler manifold with fundamental $(1,1)$-form $\omega$ written in local coordinates as $\omega = \frac{i}{2} h_{jk} \,dz^j \wedge d\bar{z}^k$ where $h = h_{jk} \,dz^j \otimes dz^k$ is the corresponding Hermitian metric, the condition of strict plurisubharmonicity may be written as $i \partial \dbar \varphi \geq c \omega$ for some $c > 0$.

\begin{proof}[Proof of Theorem \ref{thm_interpolation_scv}]
We follow the argument in \cite[Theorem 9.18]{Demailly_notes}. Let $\{ x_{\nu} \}_{\nu=1}^N$ be a finite subset of $M$ and let $P_{\nu}(z^{\nu})$ be polynomials of degree $m_{\nu}$ where $z^{\nu} = (z^{\nu}_1, \ldots, z^{\nu}_n)$ is a complex coordinate chart in a small neighborhood $U_{\nu}$ of $x_{\nu}$. We want to find a holomorphic function $f$ near $M$ such that the Taylor expansion of $f$ to order $m_{\nu}$ at $x_{\nu}$ agrees with $P_{\nu}$.
Let $\theta_{\nu} \in C^{\infty}_c(U_{\nu})$ be a cutoff function with $\theta_{\nu} = 1$ near $x_{\nu}$ and $0 \leq \theta_{\nu} \leq 1$. The point is to find a holomorphic function $f$ in the form $f = \sum \theta_{\nu} P_{\nu} + u$, where $u$ solves 
\begin{equation} \label{dbaru_v_eq}
\dbar u = v := -\dbar(\sum \theta_{\nu} P_{\nu}), \qquad \text{and $u$ vanishes to order $m_{\nu}$ at $x_{\nu}$}.
\end{equation}

We first define a function 
\[
\varphi_0 = \sum_{\nu} 2(n+m_{\nu}) \theta_{\nu} \log\,|z^{\nu}|.
\]
The function $\varphi_0$ is in $L^1_{\mathrm{loc}}(X)$, it has a logarithmic singularity at the points $x_{\nu}$ and it is smooth elsewhere. The complex Hessian $H_{\varphi_0}$ is a nonnegative measure in the sets where $\theta_{\nu} = 1$, and the negative part of $H_{\varphi_0}$ is bounded from below away from these neighborhoods. Next we let $\Omega \subset X$ be an open set such that $M \subset \Omega$ and $\psi$ is strictly plurisubharmonic near $\ol{\Omega}$, and consider 
\[
\tilde{\varphi} = \chi_0(\psi) + \varphi_0,
\]
where $\chi_0$ is a strictly increasing convex function. Such $\Omega$ and $\psi$ exist by assumption. By adding a constant we may also assume $\psi \geq 0$. Since 
\[
i\partial \dbar (\chi_0(\psi)) = i \chi_0''(\psi) \partial \psi \wedge \dbar \psi +  i \chi_0'(\psi) \partial \dbar \psi,
\]
we can choose $\chi_0$ e.g.\ as $\chi_0(t) = e^{\lambda t}$ for $\lambda > 0$ large so that $\tilde{\varphi}$ becomes plurisubharmonic near $\ol{\Omega}$. By \cite[Theorem 0.7]{Demailly1982_approximation}, there is a nonincreasing sequence $(\tilde{\varphi}_{j})$ of $C^{\infty}$ functions near $\ol{\Omega}$ with $\tilde{\varphi}_{j} \to \tilde{\varphi}$ pointwise on $\ol{\Omega}$ and $i \partial \dbar \tilde{\varphi}_{j} \geq - \omega$ on $\ol{\Omega}$. Finally fix $R > 0$ large and choose 
\[
\varphi_{j} = \chi_1(\psi) + \tilde{\varphi}_{j}, \qquad \varphi =  \chi_1(\psi) + \tilde{\varphi},
\]
where $\chi_1$ is another convex function chosen so that $i \partial \dbar \varphi_{j} \geq R \omega$ for all $j$.

Define the norm 
\[
\norm{f}_{\varphi}^2 = \int_{\Omega} |f|^2 e^{-\varphi} \,dV.
\]
The functions $\varphi_j$ are smooth and strictly plurisubharmonic near $\ol{\Omega}$. If $R > 0$ was chosen large enough to begin with, we can apply H\"ormander's $L^2$-estimate and the related solvability result \cite[Theorem 5.2.4 and Corollary 5.2.6]{Hormander_complex} together with the condition $\dbar v = 0$ to obtain a $C^{\infty}$ solution $u_j$ of 
\[
\dbar u_j = v \text{ in $\Omega$}
\]
satisfying 
\[
\norm{u_j}_{\varphi_j} \leq \norm{v}_{\varphi_j}.
\]
Whenever $j \geq j_0$ we have $\varphi_{j_0} \geq \varphi_j \geq \varphi$, which gives 
\[
\norm{u_j}_{\varphi_{j_0}} \leq \norm{u_j}_{\varphi_j} \leq \norm{v}_{\varphi}.
\]
Note that, writing $\chi = \chi_0 + \chi_1$,  
\[
\norm{v}_{\varphi}^2 = \sum \int_{U_{\nu}} |P_{\nu}|^2 |\dbar \theta_{\nu}|^2 e^{-2(n+m_{\nu}) \theta_{\nu} \log\,|z^{\nu}| - \chi(\psi)} \,dV.
\]
Since $\dbar \theta_{\nu} = 0$ near the singular points $x_{\nu}$, it follows that $\norm{v}_{\varphi}$ is finite. Thus for any $j_0$ there is a subsequence of $(u_j)$ converging weakly in the $\norm{\,\cdot\,}_{\varphi_{j_0}}$ norm. A diagonal argument gives $u$ solving $\dbar u = v$ with $\norm{u}_{\varphi_{j_0}} \leq \norm{v}_{\varphi}$ for any $j_0$. Then monotone convergence gives 
\[
\norm{u}_{\varphi} \leq \norm{v}_{\varphi}.
\]
By looking at the norm on the left, we see that $u$ must satisfy 
\[
\int_{U_{\nu}} \frac{|u|^2}{|z^{\nu}|^{2(n+m_{\nu})}} < \infty.
\]
This means that $u$ vanishes to order $m_{\nu}$ at $x_{\nu}$, proving \eqref{dbaru_v_eq}.
\end{proof}

To conclude this section we show that holomorphic separability implies the existence of local charts given by global holomorphic functions, thus proving that condition (b) in \cite{guillarmou2019linearized} was not really necessary (at least if we consider holomorphic functions near $M$).

\begin{Theorem}
Let $M$ be a compact K\"ahler manifold with $C^{\infty}$ boundary, and suppose that for any $x, y \in M$ with $x \neq y$ there is a holomorphic function $f$ near $M$ with $f(x) \neq f(y)$. Then for any $x \in M$ there are holomorphic functions $f_1, \ldots, f_n$ near $M$ which form a coordinate system near $x$.
\end{Theorem}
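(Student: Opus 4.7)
The plan is to deduce this result directly from the interpolation statement in Theorem \ref{thm_interpolation_scv}, using what has already been established in Theorem \ref{lem_sep}. By the hypothesis, condition (a) of Theorem \ref{lem_sep} holds, so the first implication of that theorem provides a smooth strictly plurisubharmonic function near $M$. This is precisely the geometric assumption required to apply Theorem \ref{thm_interpolation_scv}.

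Fix $x \in M$ and choose local complex coordinates $z = (z_1,\ldots,z_n)$ in some neighborhood of $x$ with $z(x) = 0$. I would then apply Theorem \ref{thm_interpolation_scv} with the single interpolation point $x_1 = x$, with $m = 1$, and with the linear polynomials $P_j(z) = z_j$ for $j = 1, \ldots, n$. For each $j$ this produces a holomorphic function $f_j$ defined near $M$ whose Taylor expansion at $x$ matches $z_j$ to order one. In particular $f_j(x) = 0$ and $\p f_j / \p z_k(x) = \delta_{jk}$, so the holomorphic Jacobian of $F = (f_1, \ldots, f_n)$ at $x$ is the identity matrix. By the holomorphic inverse function theorem, $F$ restricts to a biholomorphism between a neighborhood of $x$ and an open subset of $\mC^n$, which means $(f_1, \ldots, f_n)$ forms a holomorphic coordinate system near $x$.

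There is no serious obstacle here once Theorem \ref{thm_interpolation_scv} is in hand; the result is essentially packaged inside it. The only point worth checking carefully is the convention for ``Taylor expansion to order $m$''. Inspecting the last step of the proof of Theorem \ref{thm_interpolation_scv}, the weight $|z^{\nu}|^{-2(n + m_{\nu})}$ forces the correction $u$ to satisfy $u = O(|z^{\nu}|^{m_{\nu} + 1})$ at $x_{\nu}$, so that all holomorphic partial derivatives of $u$ of order $\leq m_{\nu}$ vanish at $x_{\nu}$. Taking $m_{\nu} = 1$ therefore controls both the value and all first-order derivatives of $f_j - z_j$ at $x$, which is exactly what is needed for the Jacobian criterion.
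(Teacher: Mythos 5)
Your argument is correct and is essentially identical to the paper's own proof: both deduce the existence of a strictly plurisubharmonic function from Theorem \ref{lem_sep}, then apply the interpolation result of Theorem \ref{thm_interpolation_scv} to produce holomorphic functions $f_k$ with $f_k(x)=0$ and $\p_{z_j} f_k(x)=\delta_{jk}$, and conclude via the inverse function theorem. Your additional check on the meaning of ``Taylor expansion to order $m$'' via the weight $|z^{\nu}|^{-2(n+m_{\nu})}$ is a correct reading of the interpolation proof.
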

\begin{proof}
By Theorem \ref{lem_sep} there is a strictly plurisubharmonic function near $M$. Fix $x \in M$ and choose some complex coordinates $z$ near $x$. Theorem \ref{thm_interpolation_scv} ensures that when $1 \leq k \leq n$ there is a holomorphic function $f_k$ near $M$ with 
\[
f_k(x) = 0, \qquad \p_{z_j} f_k(x) = \delta_{jk}.
\]
The functions $f_1, \ldots, f_n$ have the required property.
\end{proof}

\section{CTA manifolds}\label{sec:CTA_proofs}

In this section we prove Theorem \ref{thm:density_of_4} and Corollary \ref{cor:CTA}. Before that, let us recall the construction of CGO solutions on CTA manifolds.
These are solutions to the equation
\begin{equation}\label{eq:shrode}
 (\Delta_g+V) v = 0
\end{equation}
on a manifold $M$ compactly contained in $I\times M_0$ equipped with metric $g=c(e\oplus g_0)$. Here $M_0$ is a manifold with boundary, $I\subset \R$ is a closed interval and $c>0$  and $V$ are real valued $C^\infty$ functions on $M$. These solutions originate from the works \cite{DosSantosFerreira2009} and \cite{ferreira2013calderon}. Higher order Sobolev estimates  for the related correction terms have been obtained in \cite{FEIZMOHAMMADI20204683, LASSAS202144, KrupchykUhlmann2023_APDE}.

If we replace $v$ above by $c^{-\frac{n-2}{4}} v$, it is enough to construct solutions for the TA metric $g = e \oplus g_0$ with a new potential $V$ (see e.g.\ \cite[Section 2]{KrupchykUhlmann2023_APDE}). We will assume below that this reduction has been done.

\begin{Proposition}[CGO solutions \cite{KrupchykUhlmann2023_APDE}] \label{prop:CGO_construction}
 Let $(M,g)$ be a TA manifold with smooth boundary $\p M$, $\dim(M)=n\geq 3$, and $V\in C^\infty(M)$. Let $\gamma:[0,T]\to M_0$ be a nontangential geodesic, and let $\lambda\in \C$. For any $K\in \N$ and $k\in \N$, there is a family of functions $u=u_s\in C^\infty(M)$, where $s=\tau+i\lambda \in \C$ with $\tau \in \mR$ and $|\tau|$ large, such that  
 \begin{equation}\label{Hk_and_L4}
\begin{split}
  (\Delta_g+V)u_s=0, \\
  u_s=\tau^{\frac{n-2}{8}}e^{\pm sx_1}(v_s+r_s),
\end{split}
\end{equation}
where $\norm{r_s}_{H^k(M)}=O(\tau^{-K})$ as $\tau \to \infty$.

 The functions $v_s$ have the following properties.
 If $p\in \gamma([0,T])$, then there is $P\in \N$ such that on a neighborhood $U$ of $p$ the function $v_{s}$ is a finite sum 
	\begin{equation}\label{finite_sum}
	v_s= v^{(1)} + \cdots + v^{(P)}
	\end{equation}
	on $I\times U$, where $t_1 < \cdots < t_P$ are the times in $[0,T]$ such that $\gamma(t_l) = p$, $l=1,\ldots,P$. Each $v^{(l)}$ has the form 
	\begin{equation}\label{form_of_gb}
	v^{(l)} = e^{is \psi^{(l)}} a^{(l)},
	\end{equation}
	where each $\psi = \psi^{(l)}$ is a smooth complex function in $U$ satisfying   
	\begin{align}\label{Phi_prop}
	\begin{split}
	&\psi(\gamma(t)) = t, \quad \nabla \psi(\gamma(t)) = \dot{\gamma}(t), \\ & \mathrm{Im}(\nabla^2 \psi(\gamma(t))) \geq 0, \quad \mathrm{Im}(\nabla^2 \psi)(\gamma(t))|_{\dot{\gamma}(t)^{\perp}} > 0,
	\end{split}
	\end{align}
	for $t$ close to $t_l$.
	Here $a^{(l)} = a_0^{(l)} + O_{L^\infty}(\tau^{-1})$, where $a_0^{(l)}$ is independent of $x_1$ and $\tau$, $a_0^{(l)}(\gamma(t))$ is nonvanishing, and the support of $a^{(l)}$ can be taken to be in any neighborhood of $I\times \gamma([0,T])$ chosen beforehand.
\end{Proposition}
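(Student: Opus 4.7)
My plan is to construct $u_s$ by the standard conjugation plus Gaussian beam quasimode plus Carleman correction scheme, essentially that of \cite{KrupchykUhlmann2023_APDE}. Writing $u_s = \tau^{(n-2)/8} e^{\pm sx_1}(v_s + r_s)$, the equation $(\Delta_g+V)u_s = 0$ becomes
\[
(P_{\pm s} + V)(v_s+r_s) = 0, \qquad P_{\pm s} := e^{\mp sx_1}\Delta_g e^{\pm sx_1} = \p_{x_1}^2 \pm 2s\p_{x_1} + s^2 + \Delta_{g_0},
\]
since $g = e\oplus g_0$. The strategy is to choose $v_s$ so that $f_s := (P_{\pm s}+V)v_s$ is arbitrarily small in $H^k$, and then to solve $(P_{\pm s}+V)r_s = -f_s$ with $\|r_s\|_{H^k(M)} = O(\tau^{-K})$ via a Carleman estimate.

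For the quasimode, I would use the Gaussian beam ansatz $v^{(l)} = e^{is\psi^{(l)}}a^{(l)}$ localized near a point $p = \gamma(t_l)$ on the geodesic, with $\psi^{(l)}$ independent of $x_1$. Substituting into $P_{\pm s}$ and collecting powers of $s$, the $s^2$ coefficient vanishes precisely when the complex eikonal equation $|\nabla_{g_0}\psi|^2 = 1$ holds along $\gamma$; the conditions in \eqref{Phi_prop} follow from the classical Gaussian beam construction of Ralston, namely by prescribing $\psi$ and $\nabla\psi$ on $\gamma$ as indicated and transporting a complex symmetric Hessian along $\gamma$ by a matrix Riccati equation with positive-definite initial imaginary part (this is where Kähler-type positivity of $\mathrm{Im}\,\nabla^2\psi$ on $\dot\gamma^\perp$ comes from). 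The $s^1$ coefficient on $a_0^{(l)}$ then reduces to a transport equation along $\gamma$ of the form $2i\nabla_{g_0}\psi\cdot\nabla_{g_0}a_0 + i(\Delta_{g_0}\psi)a_0 = 0$, which admits a smooth nonvanishing solution depending only on the transversal variables. Higher amplitude corrections $a_j^{(l)}$ (possibly depending on $x_1$) are produced inductively by solving successive inhomogeneous transport equations; cutting off and Borel-summing in a prescribed tubular neighborhood of $I\times\gamma([0,T])$ gives $a^{(l)} = a_0^{(l)}+O_{L^\infty}(\tau^{-1})$ such that $(P_{\pm s}+V)v^{(l)}$ is of order $\tau^{-M}$ in any prescribed norm. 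When $\gamma$ self-intersects at $p$ at times $t_1<\cdots<t_P$, the tangent directions $\dot\gamma(t_l)$ are distinct, so the individual $v^{(l)}$ are microlocalized near disjoint Lagrangian leaves in $T^*M_0$ and simply sum as in \eqref{finite_sum} without any coherent interference; patching with a partition of unity along $\gamma$ produces the global $v_s$.

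For the correction term $r_s$, I would invoke the semiclassical Carleman estimate of \cite{DosSantosFerreira2009} for the conjugated Schrödinger operator on a TA manifold, which gives solvability of $(P_{\pm s}+V)r_s = -f_s$ in $L^2$ with a gain of $\tau^{-1}$ relative to the source. To upgrade to $H^k(M)$ uniformly in $\tau$, one commutes derivatives past $P_{\pm s}+V$; each commutator contributes lower-order terms with at most $O(\tau)$ growth in the semiclassical sense. Since the source $f_s$ can be made of order $\tau^{-M}$ in $H^{k+k_0}$ for any prescribed $k_0$ by taking sufficiently many amplitude corrections in the Gaussian beam, this yields $\|r_s\|_{H^k(M)} = O(\tau^{-K})$ as claimed. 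The main obstacle I anticipate is precisely this higher-order Sobolev bookkeeping: the Carleman estimate is naturally an $L^2$ statement, and tracking how many amplitude terms are needed to absorb the derivative losses from commutators, together with handling the self-intersections, is the technical core of the argument — a point already carried out in \cite{KrupchykUhlmann2023_APDE,LASSAS202144,FEIZMOHAMMADI20204683} and which my proof would ultimately cite.
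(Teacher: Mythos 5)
Your outline follows the same route as the source on which this Proposition rests: the paper itself offers no proof, recalling the statement verbatim from \cite{KrupchykUhlmann2023_APDE} (which builds on \cite{DosSantosFerreira2009, ferreira2013calderon, LASSAS202144, FEIZMOHAMMADI20204683}). Your sketch of the conjugation to $P_{\pm s}$, the Gaussian-beam quasimode (eikonal equation to high order along $\gamma$, Riccati equation for $\mathrm{Im}\,\nabla^2\psi$, transport equations for the amplitudes, and summation over the self-intersection times $t_1<\cdots<t_P$), and the Carleman-estimate solvability of the correction term with $H^k$ bounds matches that construction, and like the paper you appropriately defer the higher-order Sobolev bookkeeping to the cited references.
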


The functions $v_s$ in the proposition above are called quasimodes. We use CGOs to prove Theorem \ref{thm:density_of_4}. 
\begin{proof}[Proof of Theorem \ref{thm:density_of_4}]
\textbf{Step 1.\ The choice of CGOs:} 
Let us extend $(M,g)$ smoothly to a manifold $\widetilde M:=I\times  M_0$ equipped with the metric $e\oplus g_0$. Here $I \subset \R$ is a closed interval. We also extend $q$ by zero to $\widetilde M$ as an $L^\infty(\widetilde M)$ function. From the assumption \eqref{integral id to zero on CTA}, it then follows that
\begin{equation}\label{eq_density_of_4}
\int_{\widetilde M} q\s u_1 \cdots u_m \,dV = 0, 
\end{equation}
where each $u_k$, $k=1,\ldots,m$, solves $(\Delta_g+V_k) u_k = 0$ in $\M$. 
We first assume that $m=4$, and then consider the case $m>4$ separately.

Assume that $\gamma: [0,T]\to M_0$ is a unit speed nontangential geodesic in $M_0$. We choose complex geometrics optics solutions of the form \eqref{Hk_and_L4} as the solutions $u_k$, $k=1,\ldots,4$. We specifically choose them to be of the form
 \begin{equation}\label{eq:choice_of_CGOs}
  \begin{split}
 u_1&=\tau^{\frac{n-2}{8}}e^{(\tau +i\lambda_1)x_1}(v^1_{\tau+i\lambda_1}+r_1) \\
 u_2&=\tau^{\frac{n-2}{8}}e^{-\tau x_1}(v^2_{\tau}+r_2) \\
 u_3&=\tau^{\frac{n-2}{8}}\overline{e^{(\tau+i\lambda_2) x_1}(v^3_{\tau+i\lambda_2} +r_3)} \\
 u_4&=\tau^{\frac{n-2}{8}}\overline{e^{-\tau x_1}(v^4_{\tau}+r_4)},
 \end{split}
 \end{equation}
 where $\norm{r_{k}}_{H^K(\M)} \lesssim \tau^{-N}$  and $n=\dim(\M)=\dim(M_0)+1$. We will choose $K$ and $N$ large beforehand. 
 The first and the third of these CGOs correspond to choosing $s=\tau+i\lambda_1$ and $s=\tau+i\lambda_2$ in 
 \eqref{Hk_and_L4} respectively. Substitution of these CGOs $u_k$ into \eqref{eq_density_of_4} gives
\begin{equation}\label{eq:subs_cgos2}
  0=\tau^{\frac{n-2}{2}}\int_{\M} q e^{i\lambda_1x_1}e^{-i\lambda_2x_1}v^1_{\tau+i\lambda_1}v^2_{\tau}\overline{v^3_{\tau+i\lambda_2}v^4_{\tau}}\s dV+ R_\tau,
 \end{equation}
 where $R_\tau$ corresponds to the contributions from the correction terms $r_k$.  We take $\norm{r_{k}}_{H^K(M)} \lesssim \tau^{-N}$ for $K$ and $N$ large, so that $\norm{r_{k}}_{L^{\infty}(M)} \lesssim \tau^{-N}$ by Sobolev embedding. Thus 
 $R_\tau$ satisfies
 \[
  R_\tau=O(\tau^{-1}).
 \]
 Here we also used the fact that $q\in L^\infty(\M)$. As the integral in \eqref{eq:subs_cgos2} will be of size $1$ as $\tau\to \infty$, we may neglect $R_\tau$ in the following analysis.
 
 In the case where $\gamma$ does not self-intersect, the quasimodes $v^1_{\tau+i\lambda_1},\ldots, v^4_{\tau}$ on $\M$ are given by 
 \begin{equation}\label{eq:no_self_intersections_vs}
  \begin{split}
v^1_{\tau+i\lambda_1} &= e^{i(\tau+i\lambda_1) \psi} a^{(1)}, \\
v^2_{\tau} &= e^{i\tau \psi} a^{(2)}, \\
v^3_{\tau+i\lambda_2} &= e^{i(\tau+i\lambda_2) \psi} a^{(3)}, \\
v^4_{\tau} &= e^{i \tau \psi} a^{(4)}
\end{split}
\end{equation}
by Proposition~\ref{prop:CGO_construction}. Moreover, we have 
\[
 a^{(k)}(x_1,x')=a_{0}(x')+O_{L^\infty}(\tau^{-1}), \quad k=1,\ldots,4.
\]
We consider the case where $\gamma$ does not self-intersect separately in Step 2 below to convey the idea of the proof better. A reader interested only in this case can jump directly to Step 2 from here.
 
 In general, the geodesic $\gamma$ can have self-intersections. In this case the quasimodes $v^1_{\tau+i\lambda_1},\ldots, v^4_{\tau}$ have the following properties. Let $p\in \gamma([0,T])$. By Proposition~\ref{prop:CGO_construction} the quasimode $v^1_{\tau+i\lambda_1}$ is a finite sum 
	\begin{equation}\label{eq:gaussian_beam_finite_sum}
	v^1_{\tau+i\lambda_1}|_U = v^{(1,1)} + \cdots + v^{(1,P)}
	\end{equation}
	on a small enough neighborhood $U$ of $p$, where 
	\begin{equation}\label{eq:intersection_times}
	  t_1< \cdots < t_P
	\end{equation}
are the times in $[0,T]$ such that $\gamma(t_j) = p$ and $P$ depends on $p$. We choose $U$ so small that the geodesic $\gamma$ self-intersects only at $p$ in $U$, or not at all in $U$.  Moreover, there are intervals $I_j$, $j=1,\ldots,P$, such that $t_j\in I_j$ and
 \begin{equation}\label{eq:interval_conds}
  \text{supp}(v^{(1,j)})\cap \gamma(I_j)=\gamma(I_j), \quad \gamma(I_{j})\cap \gamma(I_{j'})= \{ p \}, \quad 
  I_j\cap I_{j'}=\emptyset \text{ for } j\neq j'
 \end{equation}
 holding for all $|\tau|$ large. 
 Each $v^{(1,j)}$ has the form 
	\begin{equation}\label{eq:vj_form}
	 v^{(1,j)} = e^{i(\tau+i\lambda_1) \psi^{(j)}} a^{(1,j)},
	\end{equation}
	where each $\psi = \psi^{(j)}$ is a smooth complex function defined in $U$ satisfying 
	\begin{align}\label{eq:phase_properties}
		\psi(\gamma(t)) = t, \quad \nabla \psi(\gamma(t)) = \dot{\gamma}(t), \quad \mathrm{Im}(\nabla^2 \psi(\gamma(t))) \geq 0, \quad \mathrm{Im}(\nabla^2 \psi(\gamma(t)))|_{\dot{\gamma}(t)^{\perp}} > 0,
	\end{align}
	 for $t\in I_j$. Each $a^{(1,j)}$ can be taken to be  supported in any fixed neighborhood of $I\times \gamma(I_j)$ in $\M$. 
	 We also have that 
 \begin{equation}\label{eq:amplitude}
  a^{(1,j)}=a_{0}^{(j)}+s^{-1}a_{1}^{(1,j)}+s^{-2}a_{2}^{(1,j)}+\cdots,
 \end{equation}
 with $s=\tau +i\lambda_1$, and
  \[
 a_0^{(j)}|_{I\times \gamma(I_j)}> 0.
 \]
  
 We have representations similar to \eqref{eq:gaussian_beam_finite_sum} for $v^2_{\tau}$, $v^3_{\tau+i\lambda_2}$ and $v^4_{\tau}$ as a sum of $P$ functions $v^{(k,j)}$ of the form \eqref{eq:vj_form}, $k=1,\ldots,4$, $j=1,\ldots, P$. Especially, the phase functions $\psi^{(j)}$ and leading order coefficients of the corresponding amplitudes $a_{0}^{(j)}$ are the same for all $k=1,\ldots,4$. Also the corresponding intervals $I_j$ are the same for all $k=1,\ldots,4$.

\textbf{Step 2.\ The case of no self-intersections:}
Let us first consider the special case where $\gamma$ does not have self-intersections. In this case the quasimodes $v^1_{\tau+i\lambda_1},\ldots, v^4_{\tau}$ were given by \eqref{eq:no_self_intersections_vs}.
It follows that the integral identity \eqref{eq:subs_cgos2} reads
\begin{equation}\label{eq:integ_id_no_self_intersects}
 0=\tau^{\frac{n-2}{2}}\int_{\M} qe^{-4\tau \text{Im}(\psi)} e^{i\lambda_1(x_1+i\psi)} \ol{e^{i\lambda_2(x_1+i\psi)}} a^{(1)} a^{(2)} \bar{a}^{(3)} \bar{a}^{(4)} \,dV+ R_\tau,
\end{equation}
where $R_\tau=O(\tau^{-1})$ and $a^{(k)}=a_{0}+O_{L^\infty}(\tau^{-1})$.

Let us then compute the limit $\tau\to \infty$ of \eqref{eq:integ_id_no_self_intersects} using Fermi coordinates $(t,y)\in \R\times \R^{n-2}$ (see e.g.\ \cite[Lemma 3.5]{ferreira2013calderon}). By the properties \eqref{Phi_prop}, we have that
\begin{equation} \label{imphi_taylor_series_no_self}
\mathrm{Im}(\psi)(t,y) = \frac{1}{2}\mathrm{Hess}_y(\psi)(t,0) y \cdot y + O(\abs{y}^3),
\end{equation}
where $\mathrm{Hess}_y(\psi)(t,0)$ is the Hessian in the $y$-directions. In Fermi coordinates, the amplitudes $a^{(k)}=a^{(k)}(x_1,t,y)$ are supported in $I\times [0,T]\times B$, where $B$ is a neighborhood of the origin in the $y$-variables. By writing 
\[
f= q e^{i\lambda_1(x_1+i\psi)} \ol{e^{i\lambda_2(x_1+i\psi)}},
\]
we have that the integral in  \eqref{eq:integ_id_no_self_intersects} equals
\begin{multline}\label{eq:limit_identity}
 \tau^{\frac{n-2}{2}}\int_{I\times M_0} f e^{-4\tau \text{Im}(\psi)}a^{(1)}a^{(2)}\overline{a}^{(3)}\overline{a}^{(4)} \,dV \\
 = \tau^{\frac{n-2}{2}} \int_{I\times [0,T]\times B} f(x_1,t,y) e^{-4\tau \text{Im}(\psi)(t,y)}\big(\abs{a_0(x_1,t,y)}^4+O_{L^\infty}(\tau^{-1})\big) |g(x_1,t,y)|^{1/2} \,dt \,dy \,dx_1 
\\ 
= \tau^{\frac{n-2}{2}} \tau^{-\frac{n-2}{2}} \int_{I\times [0,T]\times\R^{n-2}} f(x_1,t,y/\tau^{1/2}) e^{-4\tau \text{Im}(\psi)(t,y/\tau^{1/2})}\big(\abs{a_0(x_1,t,y/\tau^{1/2})}^4+O_{L^\infty}(\tau^{-1})\big) \\
\times |g(x_1,t,y/\tau^{1/2})|^{1/2} \,dt \,dy \,dx_1.
\end{multline}
The limit $\tau\to\infty$ of the above is
\begin{multline}\label{eq:limit_identity_final}
\int_{I\times [0,T]\times\R^{n-2}} f(x_1,t,0) e^{-4\text{Im}(\text{Hess}(\psi))(t,y)}\abs{a_0(x_1,t,0)}^4 |g(x_1,t,0)|^{1/2} \,dt \,dy \,dx_1 \\
=\int_{I\times [0,T]} c(t)f(x_1,t,0)\abs{a_0(x_1,t,0)}^4 |g(x_1,t,0)|^{1/2} \,dt \,dx_1, 
\end{multline}
where   
\begin{equation*}\label{eq:ct}
c(t)=\int_{\R^{n-2}}  e^{-4\text{Im}(\text{Hess}(\psi))(t,y)} \,dy \neq 0.
\end{equation*}
Here we have used \eqref{imphi_taylor_series_no_self} and the Lebesgue dominated convergence theorem. The latter was justified by the conditions \eqref{eq:phase_properties}, which especially imply that 
\[
 \mathrm{Im}(\text{Hess}(\psi))(t,0)>0, \quad t\in [0,T].
\]
We also have $|g(x_1,t,0)|=1$ due to a property of Fermi coordinates, and (see e.g. \cite[Eqs. 56 and 62]{FEIZMOHAMMADI20204683})
\[
 a_{0}(x_1,t,0)>0.
\]

\textbf{Step 3.\ Stone-Weierstrass theorem:}
Recall that $f=q e^{i\lambda_1x_1}e^{-\lambda_1\psi}e^{-i\lambda_2x_1}e^{-\lambda_2\psi}$. Then, using the first property in \eqref{eq:phase_properties} and taking the limit $\tau\to\infty$ of \eqref{eq:integ_id_no_self_intersects} yields
\begin{equation}\label{eq:final_limit}
 0=\int_{I\times [0,T]} c(t)e^{i\lambda_1(x_1+it)} \ol{e^{i\lambda_2(x_1+it)}}q(x_1,\gamma(t))\abs{a_0(x_1,\gamma(t))}^4 \,dx_1 \,dt.
\end{equation}
Here we used the limit \eqref{eq:limit_identity_final} of \eqref{eq:limit_identity}. Let us denote 
 \[
  \tilde q(x_1,t)=c(t)q(x_1,\gamma(t))\abs{a_0(x_1,\gamma(t))}^4.
 \]
 We may take $\lambda_1, \lambda_2 \in \mR$ and differentiate \eqref{eq:final_limit} any number of times in $\lambda_1$ and $\lambda_2$ at $\lambda_1=\lambda_2=0$. This gives
\begin{equation}\label{eq:final_identity_z}
 0=\int_{I\times [0,T]} (x_1+it)^a(x_1-it)^b\tilde q(x_1,t) \,dt \,dx_1
\end{equation}
for any $a, b \geq 0$. By the complex Stone-Weierstrass theorem, the algebra generated by powers of $z = x_1+it$ and $\overline{z} = x_1-it$ is dense in $C(I\times [0,T])$ where $I \times [0,T] \subset \C$. It follows that 
\[
 \tilde{q} \equiv 0.
\]
Since $c(t)\neq 0$ for all $t\in[0,T]$  and $a_0|_{I\times [0,T]}> 0$, it follows that
\[
 q|_{I\times \gamma([0,T])}=0.
\]

To conclude the proof (in the case where $\gamma$ does not have self-intersections), we use some well-known arguments. By \cite[Lemma 3.1]{Salo_normalforms}, there is a dense set of points $D\subset M_0$ such that for all $x\in D$ there is a non-tangential unit speed geodesic $\gamma_x$ that passes through $x$. Thus, by varying the non-tangential geodesic $\gamma$ in the above argument we obtain that $q=0$ on a dense set. Thus $q\equiv 0$ by continuity. If $m>4$, we may choose the first four solutions as the solutions $u_1,\ldots,u_4$ above, and the remaining $m-4$ solutions independent of $\tau$. Using the above argument then shows that $q\s u_5\cdots u_m\equiv 0$. By Runge approximation (see e.g.\ \cite{lassas2018poisson}), for each $x_0\in \M$ the solutions $u_5,\ldots, u_m$ can be chosen so that $u_5(x_0)\cdots u_m(x_0)\neq 0$. By letting $x_0$ range over all points of $\M$, it follows that $q\equiv 0$ also in the case $m>4$. 

\textbf{Step 4.\ Analysis of self-intersections:} Let us finally consider the case where $\gamma$ can have self-intersections. Let $\chi$ be a smooth cutoff function supported in the neighborhood $U$ of $p\in \gamma([0,T])$. According to \eqref{eq:gaussian_beam_finite_sum}, in $I\times U$ we have 
\begin{multline}\label{eq:product_expansion}
 v^1_{\tau+i\lambda_1}v^2_{\tau}\overline{v^3_{\tau+i\lambda_2}v^4_{\tau}}=\sum_{j=1}^P v^{(1,j)}v^{(2,j)}\overline{v^{(3,j)}v^{(4,j)}}\\
 +\sum_{j_1,j_2,j_3,j_4=1, \ j_a\neq j_b \text{ for some } a,b=1,\ldots,4 }^P v^{(1,j_1)}v^{(2,j_2)}\overline{v^{(3,j_3)}v^{(4,j_4)}}.
\end{multline}
Let us consider the integral
\[
 \tau^{\frac{n-2}{2}}\int_{\M} \chi\s  q\s  e^{i\lambda_1x_1}e^{-i\lambda_2x_1} v^{(1,j)}v^{(2,j)}\overline{v^{(3,j)}v^{(4,j)}} \,dV
\]
which corresponds to the first sum of \eqref{eq:product_expansion}. Integrals of this type will correspond to the leading order part of our integral identity \eqref{eq:subs_cgos2}. Using the formula \eqref{eq:vj_form} for $v^{(k,j)}$, $k=1,\ldots,4$, the above integral reads 
\begin{equation}\label{eq:single_interval}
 \tau^{\frac{n-2}{2}}\int_{\M} \chi q e^{-4\tau \text{Im}(\psi)} e^{i\lambda_1(x_1+i\psi)} \ol{e^{i\lambda_2(x_1+i\psi)}}a^{(1)}a^{(2)}\overline{a}^{(3)}\overline{a}^{(4)} \,dV.
\end{equation}
Here, for simplicity of notation, we have denoted $\psi=\psi^{(j)}$ and $a^{(k)}=a^{(k,j)}$ for $j=1,\ldots,P$ and $k=1,\ldots,4$. 

By writing 
\[
f=\chi\s q\s e^{i\lambda_1(x_1+i\psi)}e^{\ol{i\lambda_2(x_1+i\psi)}},
\]
the same computation as in \eqref{eq:limit_identity} and \eqref{eq:limit_identity_final} yields that the limit $\tau\to\infty$ of \eqref{eq:single_interval} is
\begin{equation}\label{eq:limit_identity2}
\int_{I\times I_j} c(t)f(x_1,t,0)\abs{a(x_1,t,0)}^4 |g(x_1,t,0)|^{1/2} \,dt \,dx_1. 
\end{equation}
Here   
\[
c(t)=\int_{\R^{n-2}}  e^{-4\text{Im}(\text{Hess}(\psi))(t,y)} \,dy \neq 0
\]
defines a smooth function on $[0,T]$.  
Thus by the properties \eqref{eq:interval_conds} 
 of the intervals $I_j$ we see that 
\begin{multline}\label{eq:combined_contributions}
 \sum_{j=1}^P \int_{\M}\chi v^{(1,j)}v^{(2,j)}\overline{v^{(3,j)}v^{(4,j)}} \\
 \to \int_{I\times [0,T]} c(t)\chi(\gamma(t))  q(x_1,\gamma(t)) e^{i\lambda_1(x_1+it)}   e^{\ol{i\lambda_2(x_1+it)}}  \abs{a_0}^4|_{(x_1,\gamma(t))}  \,dt \,dx_1
\end{multline}
as $\tau\to \infty$. This is because the intersection of all the sets $\gamma(I_j)$, $j=1,\ldots, P$, contains only the point $p$ (which has measure zero). Recall also that $\chi$ is supported in $U$. 

Let us next consider the integral
\begin{equation}\label{eq:cross_terms}
 \tau^{\frac{n-2}{2}}\int_{\M} \chi\s  q\s  e^{i\lambda_1x_1}e^{-i\lambda_2x_1} v^{(1,j_1)}v^{(2,j_2)}\overline{v^{(3,j_3)}v^{(4,j_4)}} \,dV,
\end{equation}
where $j_a\neq j_b$ for some $a,b=1,\ldots,4$. We argue that this integral is $O(\tau^{-\frac 12})$ and thus negligible. Since $j_a\neq j_b$, we have $t_{j_a}\neq t_{j_b}$ by \eqref{eq:intersection_times}. If $\dot\gamma(t_{j_a})$ would be proportional to $\dot\gamma(t_{j_b})$, then the geodesic $\gamma$ would be a loop by uniqueness of geodesics. Since $\gamma$ is non-tangential, this can not be the case and thus
\[
 \dot\gamma(t_{j_a}) \text{ is not proportional to } \dot\gamma(t_{j_b}).
\]
It follows from the last two conditions of \eqref{eq:phase_properties} that
\[
 \mathrm{Im}(\nabla^2 (\psi^{(j_a)}+\psi^{(j_b)}))(p) > 0
\]
and then using the third condition of \eqref{eq:phase_properties} again we see that
\begin{equation}\label{eq:exponential_decay_in_all}
 \mathrm{Im}(\nabla^2 (\psi^{(j_1)}+\psi^{(j_2)}+\psi^{(j_3)}+\psi^{(j_4)}))(p) > 0.
\end{equation}
It follows that the integrand in \eqref{eq:cross_terms} is exponentially localized not only to the $n-2$ directions transversal to the graph of $\gamma$, but to all $n-1$ directions of $M_0$ pointing away from $p$.  
Consequently, if $U$ is small enough, 
\begin{equation*}
 \left|\tau^{\frac{n-2}{2}}\int_{I\times U} \chi\s  q\s  e^{i\lambda_1x_1}e^{-i\lambda_2x_1} v^{(1,j_1)}v^{(2,j_2)}\overline{v^{(3,j_3)}v^{(4,j_4)}} \,dV\right|\leq C \tau^{\frac{n-2}{2}}\int_{\R^{n-1}}e^{-c\tau |x|^2} \,dx=O(\tau^{-\frac 12}).
\end{equation*}
Here we used that $\chi$ is supported in $U$ and $c$ is some positive constant resulting from \eqref{eq:exponential_decay_in_all}. We note that no stationary phase argument was used here.

The above arguments and estimates hold if the neighborhood $U$ of $p\in \gamma([0,T])$ was small enough. Let us next consider choosing for each $p\in \gamma([0,T])$ a neighborhood $U_p$ such that the above arguments and estimates hold.
Since $\gamma([0,T])$ is compact, we may choose a finite number of sets $U_p$ such that these sets cover a neighborhood of $\gamma([0,T])$ in $\M$. Denote the cover of $\gamma([0,T])$ constituting of these sets by $\{U_\alpha\}$. Let $\{\chi_\alpha\}$ be a partition of unity subordinate $\{U_\alpha\}$. 

We have that the quasimodes $v^1_{\tau+i\lambda_1}, \ldots, v^4_{\tau}$ are supported in $I\times \cup_\alpha U_\alpha$. Combining the above facts and estimates,  we have
\begin{multline}\label{eq:ii_combined}
 0=\int_{\widetilde M} q\s u^1 \cdots u^4 \,dV = \sum_\alpha \int_{I\times U_\alpha} \chi_\alpha q\s u^1 \cdots u^4 \s dV \\
 = \sum_\alpha \sum_{j=1}^{P_\alpha} \tau^{\frac{n-2}{2}}\int_{I\times U_\alpha} \chi_\alpha\s  q\s  e^{i\lambda_1x_1}e^{-i\lambda_2x_1} v^{(\alpha,1,j)}v^{(\alpha,2,j)}\overline{v^{(\alpha,3,j)}v^{(\alpha,4,j)}} \,dV+O(\tau^{-1})+O(\tau^{-\frac 12}).
\end{multline}
Here the terms $O(\tau^{-1})$ and  $O(\tau^{-\frac 12})$ correspond to $R_\tau$ and terms of the type \eqref{eq:cross_terms} respectively. Here, for $k=1,\ldots,4$ and $j=1,\ldots, P_\alpha$, we have denoted by
\[
 v^{(\alpha,k,j)},
\]
the function $v^{(k,j)}$ in the presentation \eqref{eq:gaussian_beam_finite_sum} with respect to the open set $U_\alpha$. Taking the limit $\tau\to\infty$ of \eqref{eq:ii_combined}, then gives 
\begin{multline*}
 0=\sum_\alpha \sum_{j=1}^{P_\alpha} \int_{I\times [0,T]} \chi_\alpha (\gamma(t))   q(x_1,\gamma(t))c(t) e^{i\lambda_1(x_1+it)}   e^{\ol{i\lambda_2(x_1+it)}} \abs{a_0(x_1,\gamma(t))}^4 \,dx_1\,dt \\
 = \sum_\alpha \int_{I\times [0,T]} \chi_\alpha (\gamma(t))  q(x_1,\gamma(t))c(t) e^{i\lambda_1(x_1+it)}   e^{\ol{i\lambda_2(x_1+it)}} \abs{a_0(x_1,\gamma(t))}^4 \,dx_1 \,dt \\
 =\int_{I\times [0,T]} q(x_1,\gamma(t))c(t) e^{i\lambda_1(x_1+it)}   e^{\ol{i\lambda_2(x_1+it)}} \abs{a_0(x_1,\gamma(t))}^4 \,dx_1 \,dt,
\end{multline*}
where we used \eqref{eq:limit_identity} and \eqref{eq:combined_contributions}. The rest of the proof is as in Step 3 above. 
\end{proof}

\begin{proof}[Proof of Corollary \ref{cor:CTA}]
The only thing to check is that also in the case where $q_j$ is only continuous,  $\Lambda_{q_1}=\Lambda_{q_2}$ implies
 \begin{equation*}
	\int_M q \s u_1 \cdots u_{m+1} \,dV = 0,
\end{equation*}
for $u_k$ solving $\Delta_gu_k=0$. This however was verified for $q_j\in L^{n/2+\eps}$ in \cite{nurminen2023determining} in $\R^n$. The  argument on Riemannian manifolds is the same. Thus Corollary \ref{cor:CTA} follows from Theorem \ref{thm:density_of_4}.
\end{proof}

\begin{Remark} 
 The case $m=2$ of Theorem \ref{thm:density_of_4} corresponds to the integral identity of the Calder\'on problem for the linear Schr\"odinger operator. In this case one obtains 
\begin{align*}
  0&=\int_{I\times [0,T]} \tilde{q}(x,t)e^{i \lambda (x_1+it)} \,dt \,dx_1
 \end{align*}
 by arguments analogous to those in the proof of Theorem \ref{thm:density_of_4} above. Here $\tilde q$ is a smooth multiple of the difference of the unknown potentials of the problem. 
 By differentiating in $\lambda$ at $\lambda=0$ it follows that
 \[
 0=\int_{I\times [0,T]} z^l\s \tilde{q}(z) \,dt \,dx_1
\]
for $l\in \N\cup \{0\}$. Here $z=x_1+it$ and $\tilde{q}(z)=\tilde{q}(x,t)$.  Stone-Weierstrass does not apply in this case since the algebra $\mathrm{span}\{z^l\}$ is not closed under complex conjugation (i.e.\ polynomials involving $\bar{z}$ are missing). Indeed, if there were a sequence $q_n(z)$ of polynomials of $z$ converging uniformly to $\tilde q(z)$, then since $\dbar q_n = 0$ it would follow that $\dbar \tilde{q} = 0$. 
Thus $\tilde q$ would have to be holomorphic. It follows that the method of proof of Theorem \ref{thm:density_of_4} cannot be applied for the Calder\'on problem for the linear Schr\"odinger operator.

The case $m=3$ of Theorem \ref{thm:density_of_4} corresponds to the Calder\'on problem for the equation $\Delta_gu+qu^2=0$ on CTA manifolds. Recovery of $q$ from the DN map in this case was shown in \cite{FLL2023}. We expect that the Stone-Weierstrass theorem can be used for the $qu^2$ nonlinearity as well, but proving this would require constructing somewhat different solutions to the ``overlinearized'' equations in \cite{FLL2023}. For this reason, we did not consider the case $m=3$  in Corollary \ref{cor:CTA} in this paper.
\end{Remark}

\bibliographystyle{alpha}
\bibliography{ref_stoneweierstrass}

\end{document}